\newcommand{\rtn}{\mathrm{\mathbf{R}}}
\newcommand*{\PR}{\mathrm{\mathbf{P}}}
\newcommand*{\EX}{\mathrm{\mathbf{E}}}
\newcommand*{\dif}{\,\mathrm{d}}
\newcommand*{\calF}{\mathcal{F}}
\newcommand*{\calL}{\mathcal{L}}
\newcommand{\calS}{\mathcal{S}}
\newcommand{\calH}{\mathcal{H}}
\newcommand{\calK}{\mathcal{K}}
\newcommand*{\prs}{\PR\text{\,--\,\,}a.s.}
\newcommand*{\ptss}{\dif\PR\!\times\!\!\dif s\text{\,--\,\,}a.e.}
\newcommand*{\pts}{\dif\PR\!\times\!\!\dif t\text{\,--\,\,}a.e.}
\newcommand*{\dte}{\dif t\text{\,--\,\,}a.e.}
\newcommand{\tT}{[0,T]}
\newcommand{\intT}[2][T]{\int^{#1}_{#2}}
\newcommand{\me}{\mathrm{e}}
\newcommand{\one}[1]{{\bf 1}_{#1}}
\newcommand{\EXlr}[1]{\mathrm{\mathbf{E}}\left[#1\right]}
\newcommand*{\vp}{\varepsilon}
\newcommand{\tvptau}{{(t+\vp)\wedge\tau}}
\newcommand{\intTe}[1][t]{\intT[t+\vp]{#1}}
\newcommand{\Me}[1][t]{M^{\vp}_{#1}}
\newcommand{\Ne}[1][t]{N^{\vp}_{#1}}
\crefname{thm}{Theorem}{Theorems}
\crefname{pro}{Proposition}{Propositions}
\crefname{lem}{Lemma}{Lemmas}
\crefname{rmk}{Remark}{Remarks}
\crefname{cor}{Corollary}{Corollaries}
\crefname{dfn}{Definition}{Definitions}
\crefname{ex}{Example}{Examples}
\crefname{section}{Section}{Sections}
\crefname{subsection}{Subsection}{Subsections}
\newtheorem{thm}{Theorem}
\newtheorem{lem}[thm]{Lemma}
\newtheorem{pro}[thm]{Proposition}
\newtheorem{rmk}[thm]{Remark}
\newtheorem{dfn}[thm]{Definition}
\newtheorem{cor}[thm]{Corollary}
\newtheorem{ex}[thm]{Example}
\def\ps@pprintTitle{%
	\let\@oddhead\@empty
	\let\@evenhead\@empty
	\def\@oddfoot{\footnotesize\itshape
		\hfill\today}%
	\let\@evenfoot\@oddfoot}
\begin{document}
\begin{frontmatter}


\title{{\boldmath\bf A representation theorem for generators of BSDEs with general growth generators in $y$ and its applications}\tnoteref{found}}
\tnotetext[found]{This work was supported by the National Natural Science Foundation of China (Nos.\;11371362 and 11601509), the  Natural Science Foundation of Jiangsu Province (No.\;BK20150167) and the Research Innovation Program for College Graduates of Jiangsu Province (No.\;KYZZ15\_0376).}
\author{Lishun XIAO}
\author{Shengjun FAN\corref{cor1}}%
\ead{f\_s\_j@126.com}
\cortext[cor1]{Corresponding author}

\address{School of Mathematics, China University of Mining and Technology, Xuzhou, Jiangsu, 221116, P.R. China}

\begin{abstract}
  In this paper we first prove a general representation theorem for generators of backward stochastic differential equations (BSDEs for short) by utilizing a localization method involved with stopping time tools and approximation techniques, where the generators only need to satisfy a weak monotonicity condition and a general growth condition in $y$ and a Lipschitz condition in $z$. This result basically solves the problem of representation theorems for generators of BSDEs with general growth generators in $y$. Then, such representation theorem is adopted to prove a probabilistic formula, in viscosity sense, of semilinear parabolic PDEs of second order. The representation theorem approach seems to be a potential tool to the research of viscosity solutions of PDEs. 
\end{abstract}

\begin{keyword}
Backward stochastic differential equation\sep
Representation theorem\sep
General growth\sep
Weak monotonicity\sep
Viscosity solution
\MSC[2010] 60H10, 35K58
\end{keyword}
\end{frontmatter}

\section{Introduction}
\label{sec:Introduction}
By \citet*{PardouxPeng1990SCL} we know that the following one dimensional backward stochastic differential equation (BSDE for short): 
\begin{equation}\label{eq:BSDEsOneDimension}
  Y_t=\xi+\intT{t}{g(s,Y_s,Z_s)}\dif s
  -\intT{t}\langle Z_s,\dif B_s\rangle,\quad t\in\tT,
\end{equation}
admits a unique adapted and square-integrable solution $(Y_t(g,T,\xi),Z_t(g,T,\xi))_{t\in\tT}$,  provided that $g$ is Lipschitz continuous with respect to $(y,z)$, and that $\xi$ and $(g(t,0,0))_{t\in\tT}$ are square-integrable. We call $T$ with $0\leq T<\infty$ the terminal time, $\xi$ the terminal condition and $g$ the generator. BSDE \eqref{eq:BSDEsOneDimension} is also denoted by BSDE $(g,T,\xi)$. 

\citet*{BriandCoquetHuMeminPeng2000ECIP} constructed a representation theorem for the generator of BSDE \eqref{eq:BSDEsOneDimension}: for each $(y,z)\in\rtn\times\rtn^d$,
\begin{equation}\label{eq:RepresentationEquation}
  g(t,y,z)=L^2-\lim_{\vp\to 0^+}\frac{1}{\vp}
  \left[
    Y_t(g,t+\vp,y+z\cdot(B_{t+\vp}-B_t))-y
  \right], \quad \forall t\in[0,T),
\end{equation}
where the generator $g$ is Lipschitz continuous in $(y,z)$ and satisfies two additional conditions, i.e., $\EX\big[\sup_{t\in\tT}|g(t,0,0)|^2\big]<\infty$ and $g(t,y,z)$ is continuous with respect to $t$. It has been widely recognized that the representation theorems provide a crucial tool for investigating properties of the generator and $g$-expectations by virtue of solutions of the BSDEs. Note that $g$-expectations were first introduced by \citet*{Peng1997BSDEP} to explain the nonlinear phenomena in mathematical finance. More details on this direction are referred to \citet*{Jiang2005cSPA}, \citet*{Jiang2008TAAP},  \citet*{Jia2010SPA} and \citet*{MaYao2010SAA}, etc.

Particularly, many attempts have been made to weaken the conditions required by the generator $g$. For instance,  \citet*{Jiang2006ScienceInChina} and \citet*{Jiang2008TAAP} eliminated the above two additional assumptions and proved that \eqref{eq:RepresentationEquation} remains valid for $\dte$ $t\in\tT$ in $L^p$ $(1\leq p<2)$ sense;   
\citet*{Jia2010SPA} proved a representation theorem where the generator $g$ is Lipschitz continuous in $y$ and uniformly continuous in $z$;  \citet*{FanJiang2010JCAM} presented a representation theorem where $g$ is only continuous and of linear growth in $(y,z)$. However, all these works only handled the case of the generator $g$ with a linear growth in $(y,z)$. In subsequent works on this direction, researchers were devoted to the case that the generator $g$ has a nonlinear growth in $(y,z)$. For example, \citet*{MaYao2010SAA} investigated the representation theorem when the generator $g$ has a quadratic growth in $z$. They applied a stopping time technique to truncate the terminal condition $|B_s-B_t|$, which ensures the first part of the solution is essentially bounded. This idea will be borrowed to deal with our problems. Furthermore, by adopting an approximation technique, \citet*{FanJiangXu2011EJP} weakened the conditions required by the generator $g$ in $y$ to a monotonicity condition with a polynomial growth. Recently, \citet*{ZhengLi2015SPL} further studied a representation theorem when the generator $g$ is monotonic with a convex growth in $y$, and has a quadratic growth in $z$, where a vital property of convex functions, superadditivity, plays a key role. We mention that up to now the problem of representation theorems for generators of BSDEs with general growth generators in $y$ has not been solved completely. 

In the present paper, we first prove a general representation theorem for generators of BSDEs where the generator $g$ only needs to satisfy a weak monotonicity condition and a general growth condition in $y$, and a Lipschitz condition in $z$. This basically answers the problem of representation theorems for generators of BSDEs with general growth generators in $y$, which has been standing for more than decade. Then, utilizing such representation theorem we establish a converse comparison theorem for solutions of BSDEs and a probabilistic formula for viscosity solutions of systems of second order semilinear parabolic partial differential equations (PDEs for short). 

For the representation theorem, the difficulty mainly comes from the general growth conditions of the generator $g$ in $y$. The whole idea to handle it is that we first use a stopping time to truncate the terminal condition $|B_s-B_t|$ and $\int^s_t|g(r,0,0)|^2\dif r$ such that the first part of the solution, $(Y_t)_{t\in\tT}$, is essentially bounded by a constant $K>0$, and then treat $g(t,q_K(y),z)$ with $q_K(y)=Ky/(|y|\vee K)$, which has a linear growth in $y$, instead of $g(t,y,z)$, by virtue of an approximation technique. We also mention that the proof procedure of our representation theorem is both simpler and clearer than those in the literature.  

For the probabilistic formula for solutions of semilinear parabolic PDEs, which is actually a nonlinear Feynman-Kac formula, its connection with the representation theorem for generators of BSDEs is discovered for the first time. More precisely, the probabilistic formula for semilinear PDEs holds as soon as such representation theorem holds. This can also be regarded as a new application of the representation theorem. We believe that the representation theorem will be a potential tool to investigate the viscosity solutions of PDEs. 

The rest of this paper is organized as follows. \cref{sec:Preliminaries} contains some notations and preliminaries. \cref{sec:RepresentationTheorem} presents the representation theorem and a converse comparison theorem. \cref{sec:Applications} proves the probabilistic formula for viscosity solutions of second order semilinear parabolic PDEs.

\section{Preliminaries}
\label{sec:Preliminaries}
Let $T>0$ be a given real number, $(\Omega,\calF,\PR)$ a probability space carrying a standard $d$-dimensional Brownian motion $(B_t)_{t\geq 0}$ and $(\calF_t)_{t\geq 0}$ the natural $\sigma$-algebra filtration generated by $(B_t)_{t\geq 0}$. We assume that $\calF_T=\calF$ and $(\calF_t)_{t\geq 0}$ is right-continuous and complete. We  denote the Euclidean norm and dot product by $|\cdot|$ and $\langle\cdot,\cdot\rangle$, respectively. For each real number $p>1$, let $L^p(\calF_t;\rtn)$ be the set of real-valued and $(\calF_t)$-measurable random variables $\xi$ such that $\EX[|\xi|^p]<\infty$; and  $\calS^2(0,T;\rtn)$ (or $\calS^2$ for simplicity) the set of real-valued, $(\calF_t)$-adapted and continuous processes $(Y_t)_{t\in\tT}$ such that $\EX\big[\sup_{t\in\tT}|Y_t|^2\big]<\infty$.
Moreover, let $\calH^2(0,T;\rtn^{d})$ (or $\calH^2$) denote the set of (equivalent classes of) $(\calF_t)$-progressively measurable ${\rtn}^{d}$-valued processes $(Z_t)_{t\in\tT}$ such that $\EX\big[\int_0^T |Z_t|^2\dif t\big]<\infty$. Finally, let $\calK$ denote the set of nondecreasing and concave continuous function  $\kappa:\rtn^+\mapsto\rtn^+$ satisfying $\kappa(0)=0$, $\kappa(u)>0$ for each $u>0$ and $\int_{0^+}\dif u/\kappa(u)=\infty$. Since $\kappa(\cdot)$ increases at most linearly, we denote the linear growth constant of $\kappa\in\calK$ by $A>0$, i.e., $\kappa(u)\leq A+Au$ for all $u\in\rtn^+$. 

We will deal with the one dimensional BSDE which is an equation of type \eqref{eq:BSDEsOneDimension}, where the terminal condition $\xi$ is $\calF_T$-measurable and the generator $g(t,y,z):\Omega\times\tT\times\rtn\times\rtn^d\mapsto\rtn$ is $(\calF_t)$-progressively measurable for each $(y,z)$, where and hereafter we suppress $\omega$ for brevity. In this paper, a solution of BSDE \eqref{eq:BSDEsOneDimension} is a pair of processes $\big(Y_t(g,T,\xi),Z_t(g,T,\xi)\big)_{t\in\tT}$ in $\calS^2\times\calH^2$ which satisfies BSDE \eqref{eq:BSDEsOneDimension}. Next we list some lemmas which will be used in the proof of our main result.


\begin{lem}[Proposition 2.2 in \citet*{Jiang2008TAAP}]\label{lem:ExtendLebesgueLemma}
  Let $1\leq p<2$. For any $(\phi_t)_{t\in\tT}\in\calH^2(0,T;\rtn)$, we have
  \begin{equation*}
    \phi_t=L^p-\lim_{\vp\to0^+}\frac{1}{\vp}\intTe\phi_s\dif s,\quad \dte\ t\in[0,T).
  \end{equation*}
\end{lem}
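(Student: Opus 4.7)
The plan is to leverage the classical Lebesgue differentiation theorem pathwise and then upgrade the mode of convergence to $L^p$ via a uniform integrability argument that exploits the strict inequality $p<2$. Write $A_\vp(t):=\frac{1}{\vp}\intTe\phi_s\dif s$.

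First, since $(\phi_t)_{t\in\tT}\in\calH^2$, Fubini's theorem yields $\intT{0}|\phi_s(\omega)|^2\dif s<\infty$ for $\PR$-a.s.\ $\omega$; hence for each such $\omega$, the classical Lebesgue differentiation theorem applied to the locally integrable function $s\mapsto\phi_s(\omega)$ produces a Lebesgue-null exceptional set $N(\omega)\subset\tT$ such that $A_\vp(t)(\omega)\to\phi_t(\omega)$ for every $t\in[0,T)\setminus N(\omega)$. Applying Fubini once more to the indicator of $\{(\omega,t):t\in N(\omega)\}$ (which is $\dif\PR\times\dif t$-null) gives that for $\dif t$-a.e.\ $t\in[0,T)$, the convergence $A_\vp(t)\to\phi_t$ holds $\PR$-a.s.

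Next, to promote this $\PR$-a.s.\ convergence into $L^p$ convergence, I would use Vitali's convergence theorem. Fix any $q\in(p,2]$, possible because $p<2$. By Jensen's inequality and Fubini,
\[
\EXlr{|A_\vp(t)|^q}\leq \frac{1}{\vp}\intTe \EXlr{|\phi_s|^q}\dif s.
\]
Since $\EXlr{|\phi_s|^q}\leq 1+\EXlr{|\phi_s|^2}$ and $\phi\in\calH^2$, the function $s\mapsto\EXlr{|\phi_s|^q}$ lies in $L^1(\tT,\dif s)$. The deterministic Lebesgue differentiation theorem then gives $\frac{1}{\vp}\intTe\EXlr{|\phi_s|^q}\dif s\to\EXlr{|\phi_t|^q}<\infty$ for $\dif t$-a.e.\ $t$, so $\sup_{\vp>0}\EXlr{|A_\vp(t)-\phi_t|^q}<\infty$ for $\dif t$-a.e.\ $t$. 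Since $q>p$, this uniform $L^q$-bound forces uniform integrability of $\{|A_\vp(t)-\phi_t|^p\}_{\vp>0}$, and together with the $\PR$-a.s.\ convergence to $0$, Vitali's theorem delivers the claimed $L^p$-convergence.

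The main technical subtlety will not be any one of the above steps but rather the careful bookkeeping of the $\dif t$-null exceptional set: it must simultaneously accommodate the Fubini exchange on $\{(\omega,t):t\in N(\omega)\}$, the Lebesgue differentiation of the deterministic function $s\mapsto\EXlr{|\phi_s|^q}$, and the set where $\EXlr{|\phi_t|^q}=\infty$. A countable union of null sets remains null, so this is a bookkeeping issue rather than a genuine obstruction. The conceptual reason the conclusion cannot be strengthened to $p=2$ is exactly this argument's reliance on a strict integrability margin $q>p$ to upgrade boundedness to uniform integrability.
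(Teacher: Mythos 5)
Your proof is correct, but note that the paper itself contains no proof of this statement: it is imported verbatim as Proposition 2.2 of \citet*{Jiang2008TAAP}, so the comparison below is with the route standard in that literature rather than with an in-paper argument. Your two-stage scheme is sound: (i) pathwise Lebesgue differentiation of $s\mapsto\phi_s(\omega)$ plus a Fubini exchange gives, for $\dte$ $t\in[0,T)$, the $\prs$ convergence $A_\vp(t)\to\phi_t$; (ii) the uniform $L^q$ bound with $p<q\le2$ yields uniform integrability of $\{|A_\vp(t)-\phi_t|^p\}$, and Vitali (applied along arbitrary sequences $\vp_n\downarrow0$) upgrades to $L^p$. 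The points needing care are exactly the ones you flag, and they are indeed only bookkeeping: the set $\{(\omega,t):t\in N(\omega)\}$ is jointly measurable because $\vp\mapsto A_\vp(t)(\omega)$ is continuous on integrable paths, so the limit may be tested along rational $\vp$; and the passage from $\lim_{\vp\to0^+}\frac{1}{\vp}\intTe\EXlr{|\phi_s|^q}\dif s=\EXlr{|\phi_t|^q}<\infty$ to $\sup_{0<\vp\le T-t}\EXlr{|A_\vp(t)|^q}<\infty$ uses the trivial bound $\vp^{-1}\le\vp_0^{-1}$ for $\vp\ge\vp_0$ together with $\intT{0}\EXlr{|\phi_s|^q}\dif s<\infty$. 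The more common proof is shorter and more functional-analytic: one checks that $s\mapsto\phi_s$ is a Bochner-integrable map from $\tT$ into $L^p(\Omega)$ (integrability follows from $\|\phi_s\|_{L^p}\le\|\phi_s\|_{L^2}$, Cauchy--Schwarz in $s$, and $\phi\in\calH^2$; strong measurability from Pettis' theorem) and invokes the Banach-space-valued Lebesgue differentiation theorem, giving $\frac{1}{\vp}\intTe\|\phi_s-\phi_t\|_{L^p}\dif s\to0$ for $\dte$ $t$ and hence the claim directly, with no uniform-integrability step. Your approach buys elementarity (only the scalar differentiation theorem, Fubini, Jensen and Vitali); its cost is that the strict margin $q>p$ is built into the mechanism, so it slightly overstates the role of the hypothesis $p<2$ --- the Bochner route would deliver even the $p=2$ case of this particular lemma, the restriction $p<2$ being genuinely needed only later in the representation theorem for the terms involving $\widetilde Z^\vp$. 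Neither point is a gap; your argument establishes exactly the stated result.
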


The following lemma presents an a priori estimate for solutions of BSDE \eqref{eq:BSDEsOneDimension}, which comes from Proposition 3.1 in \citet*{FanJiang2013AMSE}. To state it, we need the following assumption. 
\begin{enumerate}
  \renewcommand{\theenumi}{(A)}
  \renewcommand{\labelenumi}{\theenumi}
  \item\label{A:EstimateAssumption} There exists a positive constant $\mu$ such that $\pts$, for each $y\in\rtn$ and $z\in\rtn^d$,
  \[yg(t,y,z)\leq \kappa(|y|^2)+\mu|y||z|+|y|f_t,\]
  where $\kappa\in\calK$ and $(f_t)_{t\in\tT}$ is a non-negative, real-valued and $(\calF_t)$-progressively measurable process with $\EX\big[\big(\intT{0}f_t\dif t\big)^2\big]<\infty$.
\end{enumerate}

\begin{lem}\label{lem:EstimateForSolutions}
  Assume that $g$ satisfies \ref{A:EstimateAssumption} and $(Y_t,Z_t)_{t\in\tT}$ is a solution of BSDE \eqref{eq:BSDEsOneDimension}. Then there exists a constant $C>0$ depending on $\mu$ and $T$ such that for each $0\leq u\leq t\leq T$,
  \begin{align*}
	&\EX\left[\left.\sup_{r\in[t,T]}|Y_r|^2\right|\calF_u\right]
	+\EX\left[\left.\intT{t}|Z_r|^2\dif r\right|\calF_u\right]\\
	&\leq C
	\left(\EX\left[\left.|\xi|^2\right|\calF_u\right]
	+\intT{t}\kappa\left(\EX\big[|Y_r|^2|\calF_u\big]\right)\dif r +\EX\left[\left.\left(\intT{t}f_r\dif r \right)^2\right|\calF_u\right]
	\right).
  \end{align*}
\end{lem}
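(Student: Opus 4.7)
The plan is a standard Itô-based a priori estimate procedure, adapted to the weak monotonicity structure encoded in \ref{A:EstimateAssumption}. First I would apply Itô's formula to $|Y_r|^2$ on $[r,T]$, yielding
\begin{equation*}
|Y_r|^2 + \intT{r}|Z_s|^2\dif s = |\xi|^2 + 2\intT{r} Y_s g(s,Y_s,Z_s)\dif s - 2\intT{r} Y_s\langle Z_s,\dif B_s\rangle.
\end{equation*}
Invoking \ref{A:EstimateAssumption} together with the Young-type bound $2\mu|Y_s||Z_s| \leq 2\mu^2|Y_s|^2 + \tfrac{1}{2}|Z_s|^2$ absorbs half of the $|Z|^2$ integral on the left and exposes the three structural pieces $\kappa(|Y_s|^2)$, $\mu^2|Y_s|^2$, and $|Y_s|f_s$.

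Next I would take the supremum over $r\in[t,T]$ followed by the $\calF_u$-conditional expectation. The martingale is controlled by Burkholder--Davis--Gundy, producing a quantity of the form $c\,\EX[\sup_{r\in[t,T]}|Y_r|(\intT{t}|Z_s|^2\dif s)^{1/2}\mid\calF_u]$, which a further Young splitting distributes between $\EX[\sup|Y|^2\mid\calF_u]$ and $\EX[\intT{t}|Z_s|^2\dif s\mid\calF_u]$. Similarly I would bound $2\intT{t}|Y_s|f_s\dif s \leq \eta\sup_{s\in[t,T]}|Y_s|^2 + \eta^{-1}(\intT{t}f_s\dif s)^2$, which delivers exactly the $(\intT{t}f_s\dif s)^2$ quantity required by the claim. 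Choosing the Young parameters small enough then lets me absorb the $\sup|Y|^2$ and residual $\intT{t}|Z|^2$ pieces back into the left-hand side. At this stage the conditional Jensen inequality, applicable because $\kappa$ is concave, gives $\EX[\kappa(|Y_s|^2)\mid\calF_u]\leq\kappa(\EX[|Y_s|^2\mid\calF_u])$ and converts the driver term into the $\int_t^T\kappa(\EX[|Y_r|^2|\calF_u])\dif r$ integrand stated in the lemma.

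The last residue is the linear term $2\mu^2\intT{t}\EX[|Y_s|^2\mid\calF_u]\dif s$ produced by the Young step on $|Y||Z|$. I would dispose of it by a backward linear Gronwall applied to $v(r):=\EX[\sup_{s\in[r,T]}|Y_s|^2\mid\calF_u]$, which picks up a multiplicative factor $e^{c(\mu)T}$ that is then absorbed into the final constant $C=C(\mu,T)$.

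The main technical obstacle is the clash between preserving the $\kappa$-structure on the right-hand side and simultaneously disposing of the quadratic term $\mu^2|Y|^2$: in general one \emph{cannot} dominate $|Y|^2$ by $\kappa(|Y|^2)$ since $\kappa\in\calK$ need not grow linearly at infinity (for example $\kappa(u)=u/(1+u)$ lies in $\calK$ but is bounded). The linear Gronwall step is precisely the mechanism that decouples these two issues; alternatively, applying Itô from the outset to the integrating factor $e^{4\mu^2 r}|Y_r|^2$ would cancel the offending $\mu^2|Y_s|^2$ pointwise and bypass the Gronwall step entirely.
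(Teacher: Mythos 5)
The paper does not actually prove this lemma: it is imported verbatim as Proposition 3.1 of Fan and Jiang (2013), so there is no in-paper proof to compare against. Your plan is the standard a priori estimate and is, in substance, the argument used in that reference: It\^o on $|Y|^2$, the structural bound from \ref{A:EstimateAssumption} with Young's inequality applied to $\mu|Y||Z|$, conditional Jensen to pull $\kappa$ outside the expectation (legitimate precisely because $\kappa$ is concave), and a backward Gronwall (or, equivalently, the integrating factor $\me^{2\mu^2 s}$) to dispose of the quadratic residue $\mu^2\int|Y|^2$. Your observation that $|Y|^2$ cannot be dominated by $\kappa(|Y|^2)$ for a general $\kappa\in\calK$, so that this residue must be eliminated by a separate linear mechanism rather than folded into the $\kappa$ term, is exactly the right diagnosis and is the only place where the weak-monotonicity setting differs from the Lipschitz one.

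The one step that does not work as literally written is the absorption after Burkholder--Davis--Gundy. Young's inequality gives $c\,\EX\big[\sup_r|Y_r|\big(\intT{t}|Z_s|^2\dif s\big)^{1/2}\big|\calF_u\big]\leq \eta\,\EX\big[\sup_r|Y_r|^2\big|\calF_u\big]+\tfrac{c^2}{4\eta}\EX\big[\intT{t}|Z_s|^2\dif s\big|\calF_u\big]$, and the two coefficients are reciprocally linked: you cannot make both small simultaneously, so when the BDG constant $c$ is not tiny you cannot absorb both pieces into a left-hand side that carries $\intT{t}|Z_s|^2\dif s$ only with coefficient $1/2$. The standard repair is to run the estimate in two passes: first take the conditional expectation of the It\^o identity at the fixed time $r=t$ (the stochastic integral is a uniformly integrable martingale because $Y\in\calS^2$ and $Z\in\calH^2$), which already yields the bound on $\EX\big[\intT{t}|Z_s|^2\dif s\big|\calF_u\big]$ with no BDG at all; then take the supremum in $r$, apply the (conditional) BDG inequality, absorb only the $\eta\,\EX[\sup_r|Y_r|^2|\calF_u]$ piece into the left, and control the leftover $\tfrac{c^2}{4\eta}\EX\big[\intT{t}|Z_s|^2\dif s\big|\calF_u\big]$ by the first pass. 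With that reordering your argument closes and delivers a constant depending only on $\mu$ and $T$, as the statement requires.
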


Next we introduce an existence and uniqueness result for solutions of BSDE \eqref{eq:BSDEsOneDimension}, which is actually a corollary of Theorem 1 in \citet*{Fan2015JMAA}. The necessary assumptions are given as follows.
\begin{enumerate}
	\renewcommand{\theenumi}{(H\arabic{enumi})}
	\renewcommand{\labelenumi}{\theenumi}
	\item\label{H:Integrable}
	$\EX\Big[\intT{0}|g(t,0,0)|^2\dif t\Big]<\infty$;
	\item\label{H:gContinuousInY}
	$\pts$, for each $z\in\rtn^d$, $y\mapsto g(t,y,z)$ is continuous;
	\item\label{H:gWeakMonotonicityInY} $g$ satisfies a weak monotonicity condition in $y$, i.e., there exists a function $\rho\in\calK$ such that $\pts$, for each $y_1$, $y_2\in\rtn$ and $z\in\rtn^{d}$,
	\[
	(y_1-y_2)\big(g(t,y_1,z)-g(t,y_2,z)\big)\leq \rho(|y_1-y_2|^2);
	\]
	\item\label{H:gGeneralizedGeneralGrowthInY}
	$g$ has a general growth in $y$, i.e., $\psi_{\alpha}(t)\!:=\!\sup_{|y|\leq \alpha}|g(t,y,0)\!-\!g(t,0,0)|\!\in\! \calH^2(0,T;\rtn)$ for each $\alpha\geq0$;
	\item\label{H:gLipschitzInZ} There exists a constant $\lambda\geq 0$ such that $\pts$, for each $y\in\rtn$ and $z_1$, $z_2\in\rtn^d$,
	\[
	|g(t,y,z_1)-g(t,y,z_2)|\leq \lambda|z_1-z_2|.
	\]
\end{enumerate}

We also need the following growth condition introduced by \citet*{Pardoux1999NADEC} to compare with our result.

\begin{enumerate}
	\renewcommand{\theenumi}{(H\arabic{enumi}')}
	\renewcommand{\labelenumi}{\theenumi}
	\setcounter{enumi}{3}
	\item\label{H:ArbitraryGrowth}
  There exists a continuous increasing function $\varphi:\rtn^+\mapsto\rtn^+$ such that $\pts$, for each $y\in\rtn$, $|g(t,y,0)|\leq|g(t,0,0)|+\varphi(|y|)$.
\end{enumerate}

\begin{rmk}
  (i) It is clear that \ref{H:gGeneralizedGeneralGrowthInY} is weaker than \ref{H:ArbitraryGrowth} under the condition \ref{H:Integrable}. If the increasing function $\varphi(\cdot)$ is also  convex, \ref{H:ArbitraryGrowth} becomes the convex growth condition adopted by \citet*{ZhengLi2015SPL}. Hence, the general growth condition \ref{H:gGeneralizedGeneralGrowthInY} is also weaker than the convex growth condition. We mention that the supperadditivity of convex function $\varphi(\cdot)$ plays a key role in the proof of \citet*{ZhengLi2015SPL}. Yet $\psi_\alpha(t)$ in \ref{H:gGeneralizedGeneralGrowthInY} enjoys no other more explicit expressions except for the integrable property. This is the main difficulty to prove the corresponding representation theorem for generators under \ref{H:gGeneralizedGeneralGrowthInY}.
  
  (ii) Originally, it is required that $\psi_\alpha(t)$ in \ref{H:gGeneralizedGeneralGrowthInY} belongs to $L^1(\tT\times\Omega)$ in the existence and uniqueness theorem of the solution established in \citet*{BriandDelyonHu2003SPA}. However, to obtain our representation theorem the integrability of $\psi_\alpha(t)$ needs to be strengthened to the case in \ref{H:gGeneralizedGeneralGrowthInY}.
\end{rmk}

\begin{pro}\label{pro:ExistenceUniquenessOfSolutions}
  Let $g$ satisfy \ref{H:Integrable} -- \ref{H:gLipschitzInZ}. Then for each $\xi\in L^2(\calF_T;\rtn)$, BSDE \eqref{eq:BSDEsOneDimension} admits a unique solution $\big(Y_t(g,T,\xi),Z_t(g,T,\xi)\big)_{t\in\tT}$ in $\calS^2\times\calH^2$.
\end{pro}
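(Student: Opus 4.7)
The plan is to reduce the claim to Theorem~1 of \citet{Fan2015JMAA}, which the text explicitly identifies as the source. I would first verify that each of \ref{H:Integrable}--\ref{H:gLipschitzInZ} matches a hypothesis of that theorem: \ref{H:Integrable} gives square-integrability of $g(\cdot,0,0)$; \ref{H:gContinuousInY} continuity of $g$ in $y$; \ref{H:gWeakMonotonicityInY} the $\calK$-type weak monotonicity; \ref{H:gGeneralizedGeneralGrowthInY} the $\calH^2$-integrability of $\psi_\alpha$ for every $\alpha\geq 0$; and \ref{H:gLipschitzInZ} the Lipschitz dependence on $z$. Once this dictionary is established, existence and uniqueness of a solution in $\calS^2\times\calH^2$ follows directly by citation.

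For a self-contained uniqueness argument, I would apply It\^o's formula to $|\bar Y_t|^2$ for the difference $\bar Y=Y^1-Y^2$, $\bar Z=Z^1-Z^2$ of two candidate solutions. Bounding the drift by $\rho(|\bar Y|^2)$ via \ref{H:gWeakMonotonicityInY}, absorbing the $|\bar Y||\bar Z|$ cross term arising from \ref{H:gLipschitzInZ} into $\tfrac{1}{2}|\bar Z|^2 + c|\bar Y|^2$ by Young's inequality, and taking expectations would yield
\begin{equation*}
\EX[|\bar Y_t|^2] \leq \intT{t}\tilde\rho\bigl(\EX[|\bar Y_s|^2]\bigr)\dif s
\end{equation*}
for a suitable $\tilde\rho\in\calK$ built from $\rho$ plus a linear term, after which Bihari's lemma (using $\int_{0^+}\dif u/\tilde\rho(u)=\infty$) forces $\bar Y\equiv 0$ and then $\bar Z\equiv 0$.

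Existence would proceed by a truncation--approximation scheme: I would truncate the generator in $y$ (for instance by replacing $g(t,y,z)$ with $g(t,q_n(y),z)$ where $q_n(y)=ny/(|y|\vee n)$, or by a suitable inf-convolution) to reduce to a bounded-growth setting covered by classical existence results, producing an approximating sequence $(Y^n,Z^n)\in\calS^2\times\calH^2$. Then \cref{lem:EstimateForSolutions}, applied with the weakly monotonic structure of \ref{H:gWeakMonotonicityInY} and the $z$-Lipschitz part absorbed by Young, yields bounds on $\EX[\sup_t|Y^n_t|^2]+\EX[\int_0^T|Z^n_s|^2\dif s]$ that are uniform in $n$; a pairwise difference estimate under \ref{H:gWeakMonotonicityInY} plus Bihari produces Cauchyness and hence a limit $(Y,Z)$.

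The main obstacle, and the reason for preferring the direct citation of \citet{Fan2015JMAA}, is identifying $g$ at the limit. Because \ref{H:gGeneralizedGeneralGrowthInY} supplies no explicit rate beyond $\psi_\alpha\in\calH^2$---no polynomial growth, no convex superadditivity---one cannot rely on moment comparisons, and must instead combine the uniform $\calS^2$-bound on $(Y^n)$ with $\psi_\alpha$ as a dominating integrand to invoke dominated convergence for $g_n(\cdot,Y^n,Z^n)\to g(\cdot,Y,Z)$ in $\calH^2$. This dominated-convergence step is the technical heart of any direct proof and is precisely what the cited Fan~(2015) theorem carries out once and for all.
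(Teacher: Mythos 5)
Your primary argument---verifying that \ref{H:Integrable}--\ref{H:gLipschitzInZ} match the hypotheses of Theorem~1 in Fan (2015) and concluding by citation---is exactly what the paper does; it offers no independent proof of this proposition, presenting it as a direct corollary of that theorem. Your supplementary self-contained sketch (It\^o plus Bihari for uniqueness, truncation and a Cauchy argument for existence) is a reasonable outline of the standard machinery behind the cited result, but it is not needed and is not part of the paper's treatment.
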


The following \cref{pro:SequenceOfGenerator} can be seen as the starting point of this paper. The crucial steps to prove \cref{pro:SequenceOfGenerator} are the approximation using the convolution technique proposed by \citet*{LepeltierSanMartin1997SPL}. For readers' convenience, the proof procedures are given to adapt our settings. 
\begin{pro}\label{pro:SequenceOfGenerator}
  Assume that the generator $g$ satisfies \ref{H:Integrable}, \ref{H:gContinuousInY} and \ref{H:gGeneralizedGeneralGrowthInY}. Then for each $\alpha\geq 0$, there exists a $(\calF_t)$-progressively measurable process sequence $\{(g^n_\alpha(t))_{t\in\tT}\}^\infty_{n=1}$ such that $\lim_{n\to\infty}\EX[|g^n_\alpha(t)|^2]=0$ for $\dte$ $t\in\tT$, and $\pts$, for each $n\geq 1$ and $y\in\rtn$, we have
  \begin{equation*}
    |g^n_\alpha(t)|\leq 2\psi_\alpha(t)+4|g(t,0,0)|\quad\text{and}\quad
    |g(t,q_\alpha(y),0)-g(t,0,0)|
    \leq|g^n_\alpha(t)|+2n|y|,
  \end{equation*}
  where and hereafter $q_\alpha(y):=\alpha y/(|y|\vee\alpha)$ for each $\alpha\geq 0$ and $y\in\rtn$.
\end{pro}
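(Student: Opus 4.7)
The plan is to follow Lepeltier--San Martin's convolution approximation, but applied to the truncated function $\bar g(t,y):=g(t,q_\alpha(y),0)$ rather than to $g$ itself. The crucial preliminary observation is that $q_\alpha$ is exactly the Euclidean projection onto the closed ball $\{|u|\leq\alpha\}$, hence $1$-Lipschitz; since $g(t,\cdot,0)$ is continuous and the ball is compact, $\bar g(t,\cdot)$ is \emph{uniformly} continuous on all of $\rtn$, with modulus $\omega_\alpha(t,\delta)$ inherited from $g(t,\cdot,0)$ on $\{|u|\leq\alpha\}$. Moreover $|\bar g(t,y)-g(t,0,0)|\leq\psi_\alpha(t)$ for every $y\in\rtn$.

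Next I would define the inf-convolution
$$\bar g_n(t,y):=\inf_{u\in\mathbb{Q}}\bigl\{\bar g(t,u)+2n|y-u|\bigr\},$$
where the countable infimum preserves $(\calF_t)$-progressive measurability and forces $\bar g_n(t,\cdot)$ to be $2n$-Lipschitz, and then set
$$g^n_\alpha(t):=\sup_{y\in\mathbb{Q}}\bigl|\bar g(t,y)-\bar g_n(t,y)\bigr|+\bigl|\bar g_n(t,0)-g(t,0,0)\bigr|.$$
A triangle-inequality split through $\bar g_n(t,0)$, combined with the $2n$-Lipschitz property, yields
\begin{align*}
|g(t,q_\alpha(y),0)-g(t,0,0)|&\leq|\bar g(t,y)-\bar g_n(t,y)|+|\bar g_n(t,y)-\bar g_n(t,0)|+|\bar g_n(t,0)-g(t,0,0)|\\
&\leq g^n_\alpha(t)+2n|y|,
\end{align*}
which is the second required bound; the first bound $|g^n_\alpha(t)|\leq2\psi_\alpha(t)+4|g(t,0,0)|$ is then routine from the uniform envelopes $|\bar g(t,y)|,|\bar g_n(t,y)|\leq\psi_\alpha(t)+|g(t,0,0)|$.

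For the $L^2$ convergence, the dominant $2\psi_\alpha(t)+4|g(t,0,0)|$ lies in $\calH^2$ by \ref{H:Integrable} and \ref{H:gGeneralizedGeneralGrowthInY}, so dominated convergence reduces the task to showing $g^n_\alpha(t)\to 0$ a.s.\ for $\dte$ $t\in\tT$. Both pieces of $g^n_\alpha(t)$ vanish once one establishes the uniform convergence $\sup_y|\bar g(t,y)-\bar g_n(t,y)|\to 0$; writing the difference as $\sup_u\{\bar g(t,y)-\bar g(t,u)-2n|y-u|\}$, I would split the supremum at radius $\delta>0$, bound the near-diagonal contribution by $\omega_\alpha(t,\delta)$, and kill the far-diagonal part by choosing $\delta=\delta_n\to 0$ with $n\delta_n\to\infty$.

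The main obstacle is precisely this uniform (rather than merely pointwise) convergence of $\bar g_n\to\bar g$ on all of $\rtn$. Under the general growth assumption \ref{H:gGeneralizedGeneralGrowthInY} the unbounded function $g(t,\cdot,0)$ is not uniformly continuous in general, so Lepeltier--San Martin applied directly to $g$ gives no global modulus control; it is exactly the composition with the projection $q_\alpha$ that replaces $g(t,\cdot,0)$ by a bounded, uniformly continuous surrogate and thereby makes the approximation well behaved in the present setting.
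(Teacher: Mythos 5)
Your construction is sound and reaches the stated conclusion by a genuinely different arrangement of the same Lepeltier--San Martin idea. The paper also convolves the truncated map $u\mapsto g(t,q_\alpha(u),0)$, but it never builds the convolution as a function of $y$: it only records the two scalar processes $\ushort{g}^n_\alpha(t)=\inf_{u}\{g(t,q_\alpha(u),0)+n|u|\}$ and $\overline{g}^n_\alpha(t)=\sup_{u}\{g(t,q_\alpha(u),0)-n|u|\}$ (the inf- and sup-convolutions evaluated at the origin), reads off the two-sided sandwich $\ushort{g}^n_\alpha(t)-n|y|\leq g(t,q_\alpha(y),0)\leq\overline{g}^n_\alpha(t)+n|y|$ directly from the definitions, and proves $\ushort{g}^n_\alpha(t)\uparrow g(t,0,0)$ and $\overline{g}^n_\alpha(t)\downarrow g(t,0,0)$ by a minimizing-sequence argument: the near-minimizers $u_n$ are forced to $0$ by the growth of $n|u_n|$, and then \ref{H:gContinuousInY} gives the limit. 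You instead keep one full inf-convolution $\bar g_n(t,\cdot)$, use its $2n$-Lipschitz property in a triangle inequality through the origin, and replace pointwise convergence at a single point by the \emph{uniform} convergence $\sup_y|\bar g(t,y)-\bar g_n(t,y)|\to0$, for which your observation that composition with the $1$-Lipschitz projection $q_\alpha$ turns $g(t,\cdot,0)$ into a bounded, uniformly continuous function is exactly the right (and necessary) ingredient. In short: your route needs only one convolution but a stronger convergence statement backed by a modulus of continuity; the paper's needs both convolutions but only their values at $0$, only monotone pointwise convergence there, and no uniform continuity. Both are correct, and both reduce the final $L^2$ claim to dominated convergence in the same way.

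One small discrepancy: with your choice $g^n_\alpha(t)=\sup_{y}|\bar g(t,y)-\bar g_n(t,y)|+|\bar g_n(t,0)-g(t,0,0)|$, the sharpest easy estimate is $\sup_y(\bar g-\bar g_n)\leq\sup_{y,u}(\bar g(t,y)-\bar g(t,u))\leq2\psi_\alpha(t)$ together with $|\bar g_n(t,0)-g(t,0,0)|\leq\psi_\alpha(t)$, giving $|g^n_\alpha(t)|\leq3\psi_\alpha(t)$ rather than the stated $2\psi_\alpha(t)+4|g(t,0,0)|$. This is harmless --- the bound is only ever used to place $g^n_\alpha$ in $\calH^2$ and to supply a square-integrable dominant, which $3\psi_\alpha(t)$ does just as well under \ref{H:gGeneralizedGeneralGrowthInY} --- but if you want the literal inequality of the proposition you should either adjust the constants in the statement you prove or adopt the paper's two-sided construction, for which $2\psi_\alpha(t)+4|g(t,0,0)|$ falls out of $|\ushort{g}^n_\alpha(t)|,|\overline{g}^n_\alpha(t)|\leq\psi_\alpha(t)+|g(t,0,0)|$.
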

\begin{proof}
  Let the assumptions hold and fix a real number $\alpha\geq 0$. Note that $|q_\alpha(y)|\leq\alpha\wedge|y|$. By \ref{H:gGeneralizedGeneralGrowthInY} we know that $\pts$, for each $y\in\rtn$,
  \begin{equation}\label{eq:GtQAlphaLeqPsiPlusG}
    |g(t,q_\alpha(y),0)|\leq \psi_\alpha(t)+|g(t,0,0)|.
  \end{equation}
  Then, for each $n\geq 1$, we can define the following two $(\calF_t)$-progressively measurable processes:
  \begin{equation*}
    \ushort{g}^n_{\alpha}(t):=\inf_{u\in\rtn}\{g(t,q_\alpha(u),0)+n|u|\},\quad
    \bar g^n_\alpha(t):=\sup_{u\in\rtn}\{g(t,q_\alpha(u),0)-n|u|\}.
  \end{equation*}
  Now we show some properties of $\ushort{g}^n_\alpha(t)$. Firstly, it follows from \eqref{eq:GtQAlphaLeqPsiPlusG} that $\pts$, for each $u\in\rtn$,  $g(t,q_\alpha(u),0)\geq -\psi_\alpha(t)-|g(t,0,0)|$, which implies that $\ushort{g}^n_\alpha(t)$ is well defined and for each $n\geq 1$,
  \[\ushort{g}^n_\alpha(t)\geq \inf_{u\in\rtn}\{-\psi_\alpha(t)-|g(t,0,0)|+n|u|\}\geq -\psi_\alpha(t)-|g(t,0,0)|.\]
  On the other hand, it is clear from the definition that $\ushort{g}^n_\alpha(t)$ is nondecreasing in $n$ and $\pts$, $\ushort{g}^n_\alpha(t)\leq g(t,q_\alpha(0),0)=g(t,0,0)$. Thus, we know that $\pts$,  $\limsup_{n\to\infty}\ushort{g}^n_\alpha(t)\leq g(t,0,0)$ and for each $n\geq 1$,
  \begin{equation}\label{eq:GnAlphaLeqPsiPlusG}
    |\ushort{g}^n_\alpha(t)|\leq \psi_\alpha(t)+|g(t,0,0)|.
  \end{equation}
  Moreover, it follows from the definition of $\ushort{g}^n_\alpha(t)$ and \eqref{eq:GtQAlphaLeqPsiPlusG} that there exists a sequence $\{u_n\}^\infty_{n=1}$ such that $\pts$,
  \begin{equation*}
  \ushort{g}^n_\alpha(t)\geq g(t,q_\alpha(u_n),0)+n|u_n|-\frac{1}{n}
  \geq -\psi_\alpha(t)-|g(t,0,0)|+n|u_n|-\frac{1}{n},
  \end{equation*}
  which together with \eqref{eq:GnAlphaLeqPsiPlusG} implies that $u_n\to 0$ as $n\to\infty$, and that $\pts$, $\liminf_{n\to\infty}\ushort{g}^n_\alpha(t)\geq \lim_{n\to\infty}g(t,q_\alpha(u_n))=g(t,0,0)$. Therefore, $\pts$,  $\lim_{n\to\infty}\ushort{g}^n_\alpha(t)\uparrow=g(t,0,0)$. Besides, assumptions \ref{H:Integrable} and \ref{H:gGeneralizedGeneralGrowthInY} indicate that $\EX[|g(t,0,0)|^2]<\infty$ and $\EX[|\psi_\alpha(t)|^2]<\infty$ for $\dte$ $t\in\tT$. Thus, Lebesgue's dominated convergence theorem together with \eqref {eq:GnAlphaLeqPsiPlusG} yields that for $\dte$ $t\in\tT$,
  \begin{equation}\label{eq:EXGnAlphaMinusG}
    \lim_{n\to\infty} \EX[|\ushort g^n_\alpha(t)-g(t,0,0)|^2]=0.
  \end{equation}
  In the sequel, using a similar argument to that above we can prove that for $\dte$ $t\in\tT$,
  \begin{equation}\label{eq:PropertiesOfOverlineG}
    \prs,\quad|\overline g^n_\alpha(t)|\leq \psi_\alpha(t)+|g(t,0,0)|,\quad \forall n\geq 1\quad \text{and}\quad \lim_{n\to\infty} \EX[|\overline g^n_\alpha(t)-g(t,0,0)|^2]=0. 
  \end{equation}
  Finally, by the definitions of $\ushort g^n_\alpha(t)$ and $\overline g^n_\alpha(t)$ we easily get that $\pts$, for each $n\geq 1$ and $y\in\rtn$,
  \begin{equation}\label{eq:GLeqUnderlineGAndGGeq}
    g(t,q_\alpha(y),0)-g(t,0,0)\geq \ushort g^n_\alpha(t)-g(t,0,0)-n|y|,\quad
    g(t,q_\alpha(y),0)-g(t,0,0)\leq\bar g^n_\alpha(t)-g(t,0,0)+n|y|.
  \end{equation}  
  Hence, we can pick the process sequence $\{(g^n_\alpha(t))_{t\in\tT}\}^\infty_{n=1}$ as follows:
  \[g^n_\alpha(t):=|\ushort g^n_\alpha(t)-g(t,0,0)|+|\bar g^n_\alpha(t)-g(t,0,0)|,\]
  and the desired conclusion follows from \eqref{eq:GnAlphaLeqPsiPlusG} -- \eqref{eq:GLeqUnderlineGAndGGeq}.  
\end{proof}

\section{A general representation theorem for generators of BSDEs}
\label{sec:RepresentationTheorem}
In this section we present the representation theorem and its detailed proof. Our proof is partially motivated by \citet*{MaYao2010SAA} and \citet*{FanJiangXu2011EJP}. An example is also provided to illustrate the novelty of our representation theorem. At the end of this section, a converse comparison theorem for solutions of BSDEs is established.

\begin{thm}[Representation Theorem]
\label{thm:RepresentationTheoremForGenerator}
  Assume that the generator $g$ satisfies \ref{H:Integrable} -- \ref{H:gLipschitzInZ}. Then for each $y\in\rtn$, $z\in\rtn^d$, $1\leq p<2$ and $\dte$ $t\in[0,T)$,
  \begin{equation*}
    g(t,y,z)=L^p-\lim_{\vp\to 0^+}\frac{1}{\vp}
           \left[
             Y_t(g,(t+\vp)\wedge\tau,y+\langle z,B_\tvptau-B_t\rangle)-y
           \right],
  \end{equation*}
  where $\tau:=\inf\big\{s\geq t:|B_s-B_t|+\int^s_t|g(r,0,0)|^2\dif r>1\big\}\wedge T$.
\end{thm}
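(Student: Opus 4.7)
The plan is to localize the BSDE via the stopping time $\tau$ so that both the terminal $\xi_\varepsilon:=y+\langle z,B_{\sigma_\varepsilon}-B_t\rangle$ and the datum $g(\cdot,0,0)$ are controlled on $[t,\sigma_\varepsilon]$ with $\sigma_\varepsilon:=(t+\varepsilon)\wedge\tau$, which will force a suitable a priori bound on $Y^\varepsilon:=Y(g,\sigma_\varepsilon,\xi_\varepsilon)$ (obtained from \cref{pro:ExistenceUniquenessOfSolutions}) allowing the replacement of $g(s,Y_s^\varepsilon,\cdot)$ by $g(s,q_K(Y_s^\varepsilon),\cdot)$, and then to pass to the limit via the Lipschitz-in-$y$ inf/sup-convolutions furnished by \cref{pro:SequenceOfGenerator}. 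The definition of $\tau$ ensures $|\xi_\varepsilon|\le|y|+|z|$ and $\intT[\sigma_\varepsilon]{t}|g(r,0,0)|^2\dif r\le 1$ almost surely; applying \cref{lem:EstimateForSolutions} with $\mu=\lambda$, $\kappa=\rho$, $f_r=|g(r,0,0)|$ and exploiting the linear growth $\rho(u)\le A+Au$ of $\rho\in\calK$ to absorb the nonlinear term produces a deterministic constant $K$ (depending only on $|y|,|z|,T,\lambda,A$) bounding $\EX[\sup_{s\in[t,\sigma_\varepsilon]}|Y_s^\varepsilon|^2|\calF_t]$ uniformly in small $\varepsilon$. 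Finally, the optional-stopping identity $\EX[B_{\sigma_\varepsilon}-B_t|\calF_t]=0$ combined with the BSDE reduces the target to $\frac{1}{\varepsilon}\EX[\intT[\sigma_\varepsilon]{t}g(s,Y_s^\varepsilon,Z_s^\varepsilon)\dif s|\calF_t]$.

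Next, I would write $\frac{1}{\varepsilon}[Y_t^\varepsilon-y]-g(t,y,z)=J_1^\varepsilon+J_2^\varepsilon+J_3^\varepsilon$ with $J_1^\varepsilon:=\frac{1}{\varepsilon}\EX[\intT[t+\varepsilon]{t}g(s,y,z)\dif s|\calF_t]-g(t,y,z)$, $J_2^\varepsilon:=-\frac{1}{\varepsilon}\EX[\int_{\sigma_\varepsilon}^{t+\varepsilon}g(s,y,z)\dif s|\calF_t]$, and $J_3^\varepsilon:=\frac{1}{\varepsilon}\EX[\intT[\sigma_\varepsilon]{t}(g(s,Y_s^\varepsilon,Z_s^\varepsilon)-g(s,y,z))\dif s|\calF_t]$. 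Here $J_1^\varepsilon$ vanishes in $L^p$ for $\dte\ t$ by \cref{lem:ExtendLebesgueLemma}, since \ref{H:Integrable}, \ref{H:gGeneralizedGeneralGrowthInY} and \ref{H:gLipschitzInZ} place $g(\cdot,y,z)$ in $\calH^2$; $J_2^\varepsilon$ is supported on $\{\tau<t+\varepsilon\}$, and a H\"older inequality with exponent $p/2<1$ combined with $\PR(\tau<t+\varepsilon)\to 0$ dispatches it. The Lipschitz-in-$z$ contribution to $J_3^\varepsilon$ is handled by deriving, from the BSDE and the martingale representation $\xi_\varepsilon-\EX[\xi_\varepsilon|\calF_t]=\intT[\sigma_\varepsilon]{t}\langle z,\dif B_s\rangle$, the identity $\intT[\sigma_\varepsilon]{t}\langle Z_s^\varepsilon-z,\dif B_s\rangle=\intT[\sigma_\varepsilon]{t}g\,\dif s-\EX[\intT[\sigma_\varepsilon]{t}g\,\dif s|\calF_t]$, then running a self-bootstrap via It\^o's isometry and $(\int g\,\dif s)^2\le\varepsilon\int g^2\,\dif s$ to obtain $\EX[\intT[\sigma_\varepsilon]{t}|Z_s^\varepsilon-z|^2\dif s|\calF_t]=O(\varepsilon^2)$ for $\dte\ t$, yielding an $O(\sqrt{\varepsilon})$ bound on the $z$-part of $J_3^\varepsilon$ in $L^p$.

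The main obstacle is the $y$-part of $J_3^\varepsilon$, namely $\frac{1}{\varepsilon}\EX[\intT[\sigma_\varepsilon]{t}(g(s,Y_s^\varepsilon,z)-g(s,y,z))\dif s|\calF_t]$, where the general growth \ref{H:gGeneralizedGeneralGrowthInY} precludes any direct Lipschitz or polynomial control. To handle it I will invoke \cref{pro:SequenceOfGenerator} with $\alpha=K$: for each $n\ge 1$, form the Lepeltier--San Martin inf- and sup-convolutions $\ushort{g}^n(s,y,z):=\inf_u\{g(s,q_K(u),z)+n|y-u|\}$ and $\bar g^n(s,y,z):=\sup_u\{g(s,q_K(u),z)-n|y-u|\}$, which are $n$-Lipschitz in $y$, $\lambda$-Lipschitz in $z$, and sandwich $g(s,q_K(\cdot),z)$ with a gap vanishing in $L^2$ for $\dte\ t$. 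The classical representation theorem of \citet*{Jiang2008TAAP} applies to each Lipschitz approximant, while standard a priori estimates for BSDEs under the weak monotonicity \ref{H:gWeakMonotonicityInY} will control the differences between the $g$-solution and the $\ushort{g}^n$-/$\bar g^n$-solutions uniformly in $\varepsilon$; sending first $\varepsilon\to 0$ for fixed $n$ and then $n\to\infty$ will close the argument. The delicate point---and exactly what necessitates the $\calH^2$-strengthening of the integrability of $\psi_K$ in \ref{H:gGeneralizedGeneralGrowthInY} (cf.\ the remark above)---is that the approximation error must be uniformly $\calH^2$-dominated for this interchange of limits to be legitimate.
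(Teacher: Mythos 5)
Your overall architecture---localize with $\tau$ so that $Y^\vp$ is a.s.\ bounded by a deterministic constant, replace $g(s,\cdot,z)$ by $g(s,q_K(\cdot),z)$, and invoke the Lepeltier--San Martin convolutions of \cref{pro:SequenceOfGenerator}---is the paper's. Your decomposition is different: you compare directly with $g(s,y,z)$ via $J_1^\vp+J_2^\vp+J_3^\vp$, whereas the paper recenters to $\widetilde Y^\vp_s=Y^\vp_s-y-\langle z,B_{s\wedge\tau}-B_t\rangle$ and compares the resulting generator with $\widetilde g(s,0,0)$. Your $J_1^\vp$, $J_2^\vp$ and the $z$-part of $J_3^\vp$ are workable: for $J_2^\vp$ the factors combine as $\vp^{-p}\cdot\vp^{p/2}\cdot(C_t\vp)^{p/2}=C_t^{p/2}$ at Lebesgue points of $s\mapsto\EX[|g(s,y,z)|^2]$, so $\PR(\tau<t+\vp)^{1-p/2}\to0$ finishes it (this is where $p<2$ is used), and your self-bootstrap giving $\EX\big[\int_t^{\sigma_\vp}|Z^\vp_s-z|^2\dif s\big]=O(\vp^2)$ is correct for a.e.\ $t$.

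The genuine gap is in the $y$-part of $J_3^\vp$, which is the crux. You propose to compare the solution driven by $g$ with the solutions driven by the $n$-Lipschitz convolutions $\ushort{g}^n,\bar g^n$, asserting that ``standard a priori estimates under the weak monotonicity \ref{H:gWeakMonotonicityInY} control the differences uniformly in $\vp$.'' This is not standard; it is precisely the obstruction. The stability estimate for the difference $\hat Y=Y^\vp-Y^{n,\vp}$ must absorb the term $\hat Y_s\big(g(s,Y^\vp_s,Z^{n,\vp}_s)-g(s,Y^{n,\vp}_s,Z^{n,\vp}_s)\big)\leq\rho(|\hat Y_s|^2)$, and since $\rho\in\calK$ need not satisfy $\rho(u)\leq Cu$ near $0$ (e.g.\ $\rho(u)=-u\ln u$), Gronwall is unavailable: Bihari's inequality returns $|\hat Y_t|^2\leq G^{-1}\big(G(\Delta_n(\vp))+C\vp\big)$ rather than $C\Delta_n(\vp)$, and extracting the rate $|Y^\vp_t-Y^{n,\vp}_t|=O(\vp)\cdot o_n(1)$ needed to send $\vp\to0$ before $n\to\infty$ requires an analysis of $G^{-1}(G(a)+C\vp)/a$ as $a\asymp\vp^2$, which you do not supply and which is exactly why the earlier literature needed Lipschitz, polynomial or convex structure. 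The paper sidesteps this entirely by never forming the approximating BSDEs: \cref{pro:SequenceOfGenerator} is used only to bound the integrand pointwise, $|\widetilde g(s,\widetilde Y^\vp_s,\widetilde Z^\vp_s)-\widetilde g(s,0,0)|\leq\lambda|\widetilde Z^\vp_s|+\widetilde g^n_K(s)+2n|\widetilde Y^\vp_s|$, inside a single conditional expectation, and the rate $\frac1\vp\EX\big[\sup_r|\widetilde Y^\vp_r|^2+\int|\widetilde Z^\vp_r|^2\dif r\big]\to0$ comes from \ref{H:gGeneralizedGeneralGrowthInY} together with the a.s.\ bound $|\widetilde Y^\vp|\leq K$ (applying \cref{lem:EstimateForSolutions} with $\kappa\equiv0$), so that \ref{H:gWeakMonotonicityInY} is used only for boundedness and never enters a rate estimate. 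Two smaller issues: \cref{pro:SequenceOfGenerator} furnishes only the processes $g^n_K(t)$ and a sandwich inequality anchored at the origin, not Lipschitz approximating generators with an $L^2$-vanishing gap uniformly over $|u|\leq K$ (you would need a separate Dini plus dominated-convergence argument); and Jiang's representation theorem is stated for the unstopped terminal value $y+\langle z,B_{t+\vp}-B_t\rangle$, so it does not apply verbatim to your approximants with terminal time $\sigma_\vp$.
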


\begin{cor}\label{cor:RepresentationTheoremForGenerator}
Assume that the generator $g$ is deterministic and satisfies \ref{H:Integrable} -- \ref{H:gLipschitzInZ}, and $\tau$ is defined in \cref{thm:RepresentationTheoremForGenerator}. Then for each $y\in\rtn$, $z\in\rtn^d$ and $\dte$ $t\in[0,T)$,
  \begin{equation*}
    g(t,y,z)=\lim_{\vp\to 0^+}\frac{1}{\vp}
           \left[
             Y_t(g,(t+\vp)\wedge\tau,y+\langle z,B_\tvptau-B_t\rangle)-y
           \right].
  \end{equation*}
\end{cor}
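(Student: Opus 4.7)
The plan is to deduce \cref{cor:RepresentationTheoremForGenerator} from \cref{thm:RepresentationTheoremForGenerator} by proving that, when $g$ is deterministic, the random variable
\[
\Xi_\vp:=\frac{1}{\vp}\bigl[Y_t(g,\tvptau,y+\langle z,B_\tvptau-B_t\rangle)-y\bigr]
\]
is almost surely equal to a deterministic constant $c_\vp$. Once this is granted, the $L^p$-convergence $\Xi_\vp\to g(t,y,z)$ furnished by \cref{thm:RepresentationTheoremForGenerator} collapses to $|c_\vp-g(t,y,z)|\to 0$, from which the pointwise limit asserted in the corollary follows at once.

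To obtain the constancy of $\Xi_\vp$, I would exploit the independence of Brownian increments. Set $\tilde{B}_s:=B_{t+s}-B_t$ for $s\geq 0$ and let $(\tilde{\calF}_s)_{s\geq 0}$ denote its natural (augmented) filtration; then $\tilde{B}$ is a Brownian motion with respect to $(\tilde{\calF}_s)$, and $(\tilde{\calF}_s)$ is independent of $\calF_t$ by the stationary-independent-increment property. Because $g$ is deterministic, the stopping time $\tau$ (equivalently, $\tilde{\tau}:=\tau-t$) and the terminal condition $y+\langle z,\tilde{B}_{\vp\wedge\tilde{\tau}}\rangle$ depend only on $\tilde{B}$. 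Shifting time by $t$, the pair $\tilde{Y}_s:=Y_{t+s}$, $\tilde{Z}_s:=Z_{t+s}$ therefore satisfies a BSDE on $[0,\vp\wedge\tilde{\tau}]$ driven by $\tilde{B}$ whose generator $(r,y,z)\mapsto g(t+r,y,z)$, terminal time and terminal condition are all adapted to $(\tilde{\calF}_s)$.

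\cref{pro:ExistenceUniquenessOfSolutions}, applied in the filtration $(\tilde{\calF}_s)$ (after, e.g., extending to a deterministic horizon by freezing the system after $\vp\wedge\tilde{\tau}$), produces a unique $(\tilde{\calF}_s)$-adapted solution $(\hat{Y},\hat{Z})$; re-viewed as $(\calF_{t+s})$-adapted processes, $(\hat{Y},\hat{Z})$ solve the same BSDE in the original filtration, so the uniqueness assertion of \cref{pro:ExistenceUniquenessOfSolutions} forces $\tilde{Y}=\hat{Y}$ almost surely. In particular, $Y_t=\tilde{Y}_0=\hat{Y}_0$ is $\tilde{\calF}_0$-measurable and hence almost surely constant, which yields the desired constancy of $\Xi_\vp$. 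The main technical subtlety lies in this cross-filtration uniqueness step and in handling the random terminal time $\tvptau$; once these are in place, the corollary is an immediate consequence of \cref{thm:RepresentationTheoremForGenerator}.
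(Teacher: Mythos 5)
Your proposal is correct and follows exactly the route the paper intends (the corollary is stated without an explicit proof, but the same measurability observation is invoked later in \cref{sec:Applications}): since $g$ is deterministic, the generator, the stopping time $\tau$ and the terminal condition all depend only on the increments $(B_s-B_t)_{s\ge t}$, so by uniqueness of the solution $Y_t(g,\tvptau,\cdot)$ is measurable with respect to a $\sigma$-algebra independent of $\calF_t$ as well as with respect to $\calF_t$ itself, hence a.s.\ constant, and the $L^p$-limit of \cref{thm:RepresentationTheoremForGenerator} collapses to an ordinary limit of real numbers.
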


\begin{rmk}\label{rmk:GContinuousInT}
  If for each $y\in\rtn$ and $z\in\rtn^d$, $t\mapsto g(t,y,z)$ is continuous, then the corresponding identities in \cref{thm:RepresentationTheoremForGenerator} and  \cref{cor:RepresentationTheoremForGenerator} remain valid for all $t\in[0,T)$.
\end{rmk}

Next we provide an example of a generator which satisfies the general growth condition \ref{H:gGeneralizedGeneralGrowthInY} but does not meet \ref{H:ArbitraryGrowth}. 
\begin{ex}
  Let $T>0$ be a fixed real number. For each $(\omega,t,y,z)\in\Omega\times\tT\times\rtn\times\rtn^d$, define the generator $g$ as follows:
  \[g(t,y,z):=-\me^{y|B_t|}+h(|y|)+|z|,\]
  where $h(x):=(-x\ln x)\one{0\leq x\leq\delta}+\big(h'(\delta-)(x-\delta) +h(\delta)\big)\one{x>\delta}$ with $\delta$ small enough. It is not very hard to verify that the generator $g$ fulfills assumptions \ref{H:Integrable} -- \ref{H:gLipschitzInZ} with $\rho(\cdot)=h(\cdot)$, and that it does not satisfy the growth condition \ref{H:ArbitraryGrowth}. Hence, the existing works including \citet*{Jiang2006ScienceInChina}, \citet*{Jia2010SPA}, \citet*{FanJiangXu2011EJP} and \citet*{ZhengLi2015SPL} can not imply the conclusion of \cref{thm:RepresentationTheoremForGenerator}. 
\end{ex}


\begin{proof}[{\normalfont\bfseries Proof of \cref{thm:RepresentationTheoremForGenerator}}]
  Assume that $g$ satisfies \ref{H:Integrable}, \ref{H:gContinuousInY}, \ref{H:gWeakMonotonicityInY} with $\rho(\cdot)$, \ref{H:gGeneralizedGeneralGrowthInY} and \ref{H:gLipschitzInZ} with $\lambda$. Fix $(t,y,z)\in[0,T)\times\rtn\times\rtn^d$ and choose $\vp>0$ such that $\vp\leq T-t$. We define the following stopping time, 
  \[
    \tau:=\inf\left\{s\geq t: |B_s-B_t|+\int^s_t|g(r,0,0)|^2\dif r>1\right\}\wedge T.
  \]
  \cref{pro:ExistenceUniquenessOfSolutions} indicates that the following BSDE
  \begin{equation*}
    Y^\vp_s=y+\langle z,B_\tvptau-B_t\rangle+
    \intT[t+\vp]{s}\one{r<\tau}g(r,Y^\vp_r,Z^\vp_r)\dif r-\intT[t+\vp]{s}\langle Z^\vp_r,\dif B_r\rangle, \quad s\in[t,t+\vp],
  \end{equation*}
  admits a unique solution in $\calS^2\times\calH^2$, 
  \[\big(Y_s(g,(t+\vp)\wedge\tau,y+\langle z,B_\tvptau-B_t\rangle),Z_s(g,(t+\vp)\wedge\tau,y+\langle z,B_\tvptau-B_t\rangle)\big)_{s\in[t,t+\vp]}.\]
  For notational simplicity, it is denoted by $(Y^\vp_s,Z^\vp_s)_{s\in[t,t+\vp]}$. Next we first show that $Y^\vp_\cdot$ is bounded. It follows from \ref{H:gWeakMonotonicityInY} and \ref{H:gLipschitzInZ} for $g$ that $\ptss$, for each $\widetilde y\in\rtn$ and $\widetilde z\in\rtn^d$, 
  \begin{align*}
  \widetilde y\cdot\one{s<\tau} g(s,\widetilde{y},\widetilde{z})
  \leq \rho(|\widetilde y|^2)+|\widetilde y|\one{s<\tau}|g(s,0,\widetilde z)|
  \leq \rho(|\widetilde y|^2)+\lambda|\widetilde y||\widetilde z|+|\widetilde y|\one{s<\tau}|g(s,0,0)|,
  \end{align*}
  which means that \ref{A:EstimateAssumption} is satisfied by $\one{s<\tau}g$ with $\kappa(\cdot)=\rho(\cdot)$, $\mu=\lambda$ and $f_s=\one{s<\tau}|g(s,0,0)|$. Then, according to  \cref{lem:EstimateForSolutions}, H\"older's inequality and the definition of $\tau$ we obtain that there exists a constant $C>0$ depending on $\lambda$ and $T$ such that for each $t\leq u\leq s\leq t+\vp$,
  \begin{align*}
  \EX\Big[\big|Y^{\vp}_s\big|^2\Big|\calF_u\Big]
  &\leq 2C(|y|^2+|z|^2)+C
  \intTe[s]\!\!\rho\left(\EX\Big[\big| Y^\vp_r\big|^2\Big|\calF_u\Big]\right)\dif r
  +CT\EXlr{\left.\intTe[s]\!\one{r<\tau}|g(r,0,0)|^2\dif r\right|\calF_u}.\\
  &\leq C
  \intTe[s]\rho\left(\EX\Big[\big| Y^\vp_r\big|^2\Big|\calF_u\Big]\right)\dif r +2C(|y|^2+|z|^2)+CT.
  \end{align*}
  Furthermore, since $\rho(x)\leq Ax+A$ for all $x\geq 0$, Gronwall's inequality yields that for each $t\leq u\leq s\leq t+\vp$,
  \begin{equation*}
  \EX\Big[\big|Y^{\vp}_s\big|^2\Big|\calF_u\Big]\leq \big[2C(|y|^2+|z|^2)+CT+CAT\big]\me^{CAT}=:M^2.
  \end{equation*}
  Then, substituting $u=s$ in the previous inequality yields that $\prs$, $|Y^\vp_s|\leq M$ for each $s\in[t,t+\vp]$.   
  
  In what follows, we set, for each $s\in[t,t+\vp]$, 
  \begin{equation*}
    \widetilde Y^\vp_s:=
    Y^\vp_s-y-\langle z,B_{s\wedge\tau}-B_t\rangle,\qquad
    \widetilde Z^\vp_s:=Z^\vp_s-\one{s<\tau}z.
  \end{equation*}
  Then, it is not hard to verify that $(\widetilde Y^\vp_s,\widetilde Z^\vp_s)_{s\in[t,t+\vp]}\in\calS^2\times\calH^2$ is a solution of the following BSDE:
  \begin{equation}\label{eq:BSDEWithWidetildeYZ}
    \widetilde{Y}^\vp_s=
    \intT[t+\vp]{s}\widetilde g(r,\widetilde{Y}^\vp_r,\widetilde{Z}^\vp_r)\dif r
    -\intT[t+\vp]{s}\langle\widetilde{Z}^\vp_r,\dif B_r\rangle, \quad s\in[t,t+\vp],
  \end{equation}
  where $\widetilde g(s,\widetilde y,\widetilde z):=\one{s<\tau}g(s,\widetilde y+y+\langle z,B_{s\wedge\tau}-B_t\rangle,\widetilde z+z)$ for each $\widetilde y\in\rtn$ and $\widetilde z\in\rtn^d$. And by the definition of $\tau$ we know that $\prs$, for each $s\in[t,t+\vp]$,
  \begin{equation}\label{eq:TildeYVpBoundedByK}
    |\widetilde Y^\vp_s|\leq |Y^\vp_s|+|y|+|z|
    \leq M+|y|+|z|=:K.
  \end{equation}
  From the definition of $\widetilde g$, it is straightforward to check that $\widetilde g$ fulfills \ref{H:gContinuousInY}, \ref{H:gWeakMonotonicityInY} and \ref{H:gLipschitzInZ} with $\lambda$. Moreover, by \ref{H:gGeneralizedGeneralGrowthInY} and \ref{H:gLipschitzInZ} for $g$ we have that $\ptss$, for each $\widetilde\alpha\geq 0$ and $\widetilde y\in\rtn$ with $|\widetilde{y}|\leq\widetilde\alpha$, 
  \begin{align}
  |\widetilde g(s,\widetilde y,0)-\widetilde g(s,0,0)|
  \leq{}& 2\lambda|z|+|g(s,\widetilde y+y+\langle z,B_{s\wedge\tau}-B_t\rangle,0)-g(s,0,0)|\nonumber\\
  &+|g(s,y+\langle z,B_{s\wedge\tau}-B_t\rangle,0)-g(s,0,0)|\nonumber\\
  \leq{}& 2\lambda|z|+\psi_{\widetilde\alpha+|y|+|z|}(s)+\psi_{|y|+|z|}(s).\label{eq:TildePsiAlphaBoundedByPsi}
  \end{align}
  Set $\widetilde\psi_{\widetilde\alpha}(s):=\sup_{|\widetilde y|\leq\widetilde\alpha}|\widetilde g(s,\widetilde y,0)-\widetilde g(s,0,0)|$. Then
  \begin{equation*}
    \EXlr{\intT{0}|\widetilde\psi_{\widetilde\alpha}(r)|^2\dif r}
    \leq 12\lambda^2|z|^2T +3\EXlr{\intT{0}\big(|\psi_{\widetilde\alpha+|y|+|z|}(r)|^2+|\psi_{|y|+|z|}(r)|^2\big)\dif r}<\infty,
  \end{equation*}
  which implies that $\widetilde g$ satisfies \ref{H:gGeneralizedGeneralGrowthInY} with $\widetilde \psi_{\widetilde\alpha}(t)$.
  Similar arguments as \eqref{eq:TildePsiAlphaBoundedByPsi} yield that $\ptss$,
  \begin{equation*}
  |\widetilde{g}(s,0,0)|
  \leq \lambda|z|+\psi_{|y|+|z|}(s)+\one{s<\tau}|g(s,0,0)|,
  \end{equation*} 
  from which it follows that
  \[
    \EXlr{\intT{0}|\widetilde g(s,0,0)|^2\dif s}
    \leq 3\lambda^2|z|^2T+3\EXlr{\intT{0}|\psi_{|y|+|z|}(s)|^2\dif s}+3\EXlr{\intT{0}|g(s,0,0)|^2\dif s}<\infty.
  \]
  Hence, \ref{H:Integrable} holds true for $\widetilde g$. Thus, we have shown that $\widetilde g$ satisfies \ref{H:Integrable} -- \ref{H:gLipschitzInZ}. Now we can demonstrate the following \cref{lem:TidleYZVpLimitZero}.
  \begin{lem}\label{lem:TidleYZVpLimitZero}
    For $\dte$ $t\in[0,T)$, we have that
    \begin{equation*}
      \lim_{\vp\to0^+}\frac{1}{\vp}
      \EXlr{\sup_{r\in[t,t+\vp]}|\widetilde{Y}^\vp_r|^2
      +\intT[t+\vp]{t}|\widetilde Z^\vp_r|^2\dif r}=0.
    \end{equation*}
  \end{lem}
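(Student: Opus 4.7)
The strategy is to replace the weak-monotonicity bound on $\widetilde g$ by a genuine linear-growth control in $y$, which is possible because $|\widetilde Y^\vp_r|\le K$ by \eqref{eq:TildeYVpBoundedByK}. Since the main proof has already established that $\widetilde g$ satisfies \ref{H:Integrable} -- \ref{H:gLipschitzInZ}, \cref{pro:SequenceOfGenerator} applied to $\widetilde g$ at level $\alpha=K$ yields, for each $n\ge 1$, an $(\calF_t)$-progressively measurable process $\widetilde g^n_K$ that is dominated by $2\widetilde\psi_K(\cdot)+4|\widetilde g(\cdot,0,0)|\in\calH^2(0,T;\rtn)$ and satisfies $|\widetilde g(r,q_K(\widetilde y),0)-\widetilde g(r,0,0)|\le|\widetilde g^n_K(r)|+2n|\widetilde y|$. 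Because $q_K(\widetilde Y^\vp_r)=\widetilde Y^\vp_r$, combining this with \ref{H:gLipschitzInZ} for $\widetilde g$ gives the pointwise growth bound
\begin{equation*}
|\widetilde g(r,\widetilde Y^\vp_r,\widetilde Z^\vp_r)|\;\le\;|\widetilde g(r,0,0)|+|\widetilde g^n_K(r)|+2n|\widetilde Y^\vp_r|+\lambda|\widetilde Z^\vp_r|,
\end{equation*}
which effectively linearizes the $y$-dependence of $\widetilde g$ at the price of an integrable residual.

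Fixing $n=1$, I would then combine the conditional representation $\widetilde Y^\vp_s=\EX[\intT[t+\vp]{s}\widetilde g(r,\widetilde Y^\vp_r,\widetilde Z^\vp_r)\,\dif r\mid\calF_s]$, which together with the Cauchy--Schwarz inequality produces the key extra factor of $\vp$ via
\[
\EX[|\widetilde Y^\vp_s|^2]\le\vp\,\EX\Big[\intT[t+\vp]{s}|\widetilde g(r,\widetilde Y^\vp_r,\widetilde Z^\vp_r)|^2\dif r\Big],
\]
with the It\^o identity for $|\widetilde Y^\vp_s|^2$ and a BDG estimate for the stochastic-integral part. Substituting the growth bound above into both, absorbing the $\widetilde Y^\vp$- and $\widetilde Z^\vp$-contributions on the left-hand sides via Young's inequality, and Gronwall lead (for all sufficiently small $\vp>0$) to an inequality of the form
\begin{equation*}
\EX\Big[\sup_{s\in[t,t+\vp]}|\widetilde Y^\vp_s|^2+\intT[t+\vp]{t}|\widetilde Z^\vp_r|^2\dif r\Big]\;\le\;C_n\,\vp\intT[t+\vp]{t}\EX\big[|\widetilde g(r,0,0)|^2+|\widetilde g^n_K(r)|^2\big]\dif r,
\end{equation*}
where $C_n=C(n,K,\lambda,T)$. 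Dividing by $\vp$, the right-hand side tends to $0$ as $\vp\to 0^+$ for every $t\in[0,T)$ by absolute continuity of the Lebesgue integral, since both $r\mapsto\EX[|\widetilde g(r,0,0)|^2]$ and $r\mapsto\EX[|\widetilde g^n_K(r)|^2]$ belong to $L^1([0,T])$.

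The main obstacle is producing that extra factor of $\vp$. A direct application of \cref{lem:EstimateForSolutions} to \eqref{eq:BSDEWithWidetildeYZ} only produces an $O(\vp)$ bound for the left-hand side, which is too crude after division by $\vp$: the term $\intT[t+\vp]{t}\rho(\EX[|\widetilde Y^\vp_r|^2])\dif r$ cannot be improved in general because $\rho\in\calK$ need not vanish linearly at the origin (e.g., $\rho(u)=-u\ln u$ near $0$ is admissible). The approximation furnished by \cref{pro:SequenceOfGenerator} circumvents this exactly by replacing the weakly monotonic dependence on $y$ with a $2n$-Lipschitz one plus an integrable residual $|\widetilde g^n_K|$, and it is precisely this linear-in-$y$ control, combined with the conditional representation formula, that unlocks the desired $o(\vp)$ conclusion.
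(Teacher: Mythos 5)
Your proposal is correct in substance and reaches the right final estimate, but it takes a more roundabout route than the paper. The paper's own proof is a two-line verification plus one invocation of \cref{lem:EstimateForSolutions}: since $|\widetilde Y^\vp_s|\leq K$, one has $|\widetilde g(s,\widetilde Y^\vp_s,0)-\widetilde g(s,0,0)|\leq\widetilde\psi_K(s)$ directly from the definition of $\widetilde\psi_K$, so along the solution the generator $\widetilde g$ satisfies assumption \ref{A:EstimateAssumption} with $\kappa(\cdot)\equiv 0$, $\mu=\lambda$ and $f_s=\widetilde\psi_K(s)+|\widetilde g(s,0,0)|$; \cref{lem:EstimateForSolutions} (with zero terminal value and no $\kappa$-term) then gives
\begin{equation*}
\EXlr{\sup_{r\in[t,t+\vp]}|\widetilde Y^\vp_r|^2+\intTe|\widetilde Z^\vp_r|^2\dif r}\leq C'\,\EXlr{\Big(\intTe f_r\dif r\Big)^2}\leq C'\vp\,\EXlr{\intTe f_r^2\dif r},
\end{equation*}
and absolute continuity of the integral finishes the job. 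Your detour through \cref{pro:SequenceOfGenerator} is unnecessary for this lemma: the bound $|\widetilde g(r,q_K(\widetilde y),0)-\widetilde g(r,0,0)|\leq|\widetilde g^n_K(r)|+2n|\widetilde y|$ is weaker than the bound by $\widetilde\psi_K(r)$ that you already have for free on the ball $|\widetilde y|\leq K$, and the extra $2n|\widetilde Y^\vp_r|$ term only forces you to run Gronwall. (The approximation of \cref{pro:SequenceOfGenerator} is genuinely needed later, for the term $\Me-\Ne$, where $\vp\to0^+$ is taken before $n\to\infty$; here it buys nothing.) Your diagnosis that a "direct" application of \cref{lem:EstimateForSolutions} fails is accurate only if one feeds it $\kappa=\rho$ from \ref{H:gWeakMonotonicityInY}; the paper's point is precisely that the boundedness of $\widetilde Y^\vp$ lets one take $\kappa\equiv0$ and push all the $y$-dependence into $f_s$. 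One step of your sketch deserves care: when you estimate $\EX[\int_t^{t+\vp}|\widetilde Z^\vp_r|^2\dif r]$ from It\^o's formula, the term $2\EX[\int|\widetilde Y^\vp_r|f_r\dif r]$ must be handled as $\tfrac12\EX[\sup_r|\widetilde Y^\vp_r|^2]+2\EX[(\int f_r\dif r)^2]$ so that H\"older still produces the factor $\vp$; a pointwise Young split $2|\widetilde Y^\vp_r|f_r\leq|\widetilde Y^\vp_r|^2+f_r^2$ would leave an $\EX[\int f_r^2\dif r]$ term that is only $O(\vp)$, not $o(\vp)$, and would ruin the $\widetilde Z$-part of the conclusion.
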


\begin{proof}
  It follows from \ref{H:gGeneralizedGeneralGrowthInY} and \ref{H:gLipschitzInZ} for $\widetilde g$ together with \eqref{eq:TildeYVpBoundedByK} that $\ptss$, 
  \begin{align*}
    \widetilde Y^\vp_s\cdot\widetilde g(s,\widetilde Y^\vp_s,\widetilde Z^\vp_s)
    &\leq \lambda|\widetilde Y^\vp_s||\widetilde Z^\vp_s|+|\widetilde Y^\vp_s|\big(|\widetilde g(s,\widetilde Y^\vp_s,0)-\widetilde g(s,0,0)|+|\widetilde g(s,0,0)|\big)\\
    &\leq \lambda|\widetilde Y^\vp_s||\widetilde Z^\vp_s|+|\widetilde Y^\vp_s|\big(\widetilde \psi_K(s)+|\widetilde g(s,0,0)|\big),
  \end{align*}
  which indicates that \ref{A:EstimateAssumption} is fulfilled by $\widetilde g$ with $\kappa(\cdot)\equiv 0$, $\mu=\lambda$ and $f_s=\widetilde \psi_K(s)+|\widetilde g(s,0,0)|$. Then \cref{lem:EstimateForSolutions} and H\"older's inequality contribute to the existence of a constant $C'>0$ such that 
  \begin{align*}
    \frac{1}{\vp}\EXlr{\sup_{r\in[t,t+\vp]}|\widetilde Y^\vp_r|^2+\intTe|\widetilde Z^\vp_r|^2\dif t}
    &\leq \frac{C'}{\vp}\EXlr{\left(\intTe\big(\widetilde \psi_K(r)+|\widetilde g(r,0,0)|\big)\dif r\right)^2}\\
    &\leq 2C'\EXlr{\intTe|\widetilde \psi_K(r)|^2\dif r}+2C'\EXlr{\intTe|\widetilde g(r,0,0)|^2\dif r}.
  \end{align*}
  Thus, \ref{H:Integrable} and  \ref{H:gGeneralizedGeneralGrowthInY} for $\widetilde g$ and the absolute continuity of integrals yield the desired result.
\end{proof}

Come back to the proof of \cref{thm:RepresentationTheoremForGenerator}. Taking $s=t$ and then the conditional expectation with respect to $\calF_t$ in both sides of BSDE  \eqref{eq:BSDEWithWidetildeYZ} result in the following identity, $\prs$,
  \begin{equation*}
    \frac{1}{\vp}(Y^\vp_t-y)
    =\frac{1}{\vp}\widetilde Y^\vp_t
    =\frac{1}{\vp}\EXlr{\left.\intT[t+\vp]{t}\widetilde g(r,\widetilde Y^\vp_r,\widetilde Z^\vp_r)\dif r\right|\calF_t}.
  \end{equation*}
  Next we set, 
  \begin{align*}
    \Me:=\frac{1}{\vp}\EXlr{\left.
          \intTe \widetilde g(r,\widetilde Y^\vp_r,\widetilde Z^\vp_r)\dif r
          \right|\calF_t},\quad
     \Ne:=\frac{1}{\vp}\EXlr{\left.\intTe \widetilde g(r,0,0)\dif r\right|\calF_t}.
  \end{align*}
  Thus, it holds that
  \begin{equation*}
    \frac{1}{\vp}(Y^\vp_t-y)-g(t,y,z)
    =\frac{1}{\vp}\widetilde Y^\vp_t-\widetilde g(t,0,0)
    =\Me-\Ne+\Ne-\widetilde g(t,0,0).
  \end{equation*}
  Then it is sufficient to prove that $(\Me-\Ne)$ and $(\Ne-\widetilde g(t,0,0))$ tend to $0$ in $L^p$ $(1\leq p<2)$ sense for $\dte$ $t\in[0,T)$ as $\vp\to0^+$, respectively. 
  
  We first treat the term $(\Me-\Ne)$. Since $\widetilde g$ satisfies \ref{H:Integrable}, \ref{H:gContinuousInY} and \ref{H:gGeneralizedGeneralGrowthInY}, we can deduce from \cref{pro:SequenceOfGenerator} and \eqref{eq:TildeYVpBoundedByK} that there exists a process sequence $\{(\widetilde g^n_K(t))_{t\in\tT}\}^\infty_{n=1}$ such that for $\dte$ $t\in\tT$, $\lim_{n\to\infty}\EX[|\widetilde g^n_K(t)|^2]=0$, and $\ptss$, for each $n\geq 1$, \begin{equation}\label{eq:TildeGnLeqPsiKG}
    |\widetilde g^n_K(s)|\leq 2\widetilde\psi_K(t)+4|\widetilde g(s,0,0)|,
  \end{equation}
  \begin{align*}
    |\widetilde g(s,\widetilde Y^\vp_s,\widetilde Z^\vp_s)-\widetilde g(s,0,0)|
    &=|\widetilde g(s,q_K(\widetilde Y^\vp_s),\widetilde Z^\vp_s)-\widetilde g(s,0,0)|
    \leq \lambda|\widetilde Z^\vp_s|+|\widetilde g(s,q_K(\widetilde Y^\vp_s),0)-\widetilde g(s,0,0)|\\
    &\leq \lambda|\widetilde Z^\vp_s|+\widetilde g^n_K(s)+2n|\widetilde Y^\vp_s|.
  \end{align*}
  Then Jesen's and H\"older's inequalities yield that for $\dte$ $t\in[0,T)$, $n\geq 1$ and $1\leq p<2$,
  \begin{align}
    \EXlr{|\Me-\Ne|^p}
    &\leq\EX
    \left[\left(\frac{1}{\vp}\intTe\big|\widetilde g(r,\widetilde Y^\vp_r,\widetilde Z^\vp_r)-\widetilde g(r,0,0)\big|\dif r\right)^p\right]\nonumber\\
    &\leq 2^p\EXlr{\left(\frac{1}{\vp}\intTe\big(2n|\widetilde Y^\vp_r|+\lambda|\widetilde Z^\vp_r|\big)\dif r\right)^p}+2^p\EXlr{\left(\frac{1}{\vp}\intTe|\widetilde g^n_K(r)|\dif r\right)^p}\nonumber\\
    &\leq4^p(2n+\lambda)^p\EXlr{\frac{1}{\vp}\intTe\big(|\widetilde Y^\vp_r|^p+|\widetilde Z^\vp_r|^p\big)\dif r}+2^p\EXlr{\left(\frac{1}{\vp}\intTe|\widetilde g^n_K(r)|\dif r\right)^p}.\label{eq:MEpMinusNEpLeq}
  \end{align}
  The first term on the right hand side of the previous inequality tends to $0$ as $\vp\to0^+$ because of \cref{lem:TidleYZVpLimitZero}. Concerning the second term, we can deduce from \eqref{eq:TildeGnLeqPsiKG} that for each $n\geq1$,
  \[\EX\left[\intT{0}|\widetilde g^n_K(r)|^2\dif r\right]\leq 32\EX\left[\intT{0}\big(|\widetilde \psi_K(r)|^2+|\widetilde g(r,0,0)|^2\big)\dif r\right]<\infty.\]
  Thus, \cref{lem:ExtendLebesgueLemma} indicates that for $\dte$ $t\in[0,T)$, each $n\geq 1$ and $1\leq p<2$,
  \begin{equation*}
    \lim_{\vp\to0^+}\EXlr{\left(\frac{1}{\vp}\intTe \widetilde g^n_K(r)\dif r\right)^p}
    =\EXlr{|\widetilde g^n_K(t)|^p}.
  \end{equation*}
  Note that the right hand side in the previous identity tends to $0$ as $n\to\infty$. By sending $\vp\to0^+$ and then $n\to\infty$ in \eqref{eq:MEpMinusNEpLeq}, we get that for $\dte$ $t\in[0,T)$, $\lim_{\vp\to0^+}\EXlr{|\Me-\Ne|^p}=0$.

  Now we consider the term $(\Ne-\widetilde g(t,0,0))$. Taking into account that \ref{H:Integrable} holds for $\widetilde g$, we can derive from Jensen's inequality and \cref{lem:ExtendLebesgueLemma} that for $\dte$ $t\in[0,T)$ and each $1\leq p<2$, as $\vp\to0^+$,
  \begin{equation*}
    \EXlr{|\Ne-\widetilde g(t,0,0)|^p}
    \leq \EXlr{\left(\frac{1}{\vp}\intTe|\widetilde g(r,0,0)-\widetilde g(t,0,0)|\dif r\right)^p}\to0.
  \end{equation*}
  The proof of \cref{thm:RepresentationTheoremForGenerator} is complete.
\end{proof}

\begin{rmk}
	With analogous techniques and arguments in the proof of \cref{thm:RepresentationTheoremForGenerator}, the representation theorem via coupled forward and backward stochastic differential equations (like \citet*{Jiang2005cSPA}) and in stochastic process spaces (like \citet*{FanJiangXu2011EJP}) can also be proved. 
\end{rmk}

With the help of  \cref{thm:RepresentationTheoremForGenerator}, we can obtain a corresponding converse comparison theorem for solutions of one dimensional BSDEs with weak monotonicity and general growth generators in $y$. Its proof is omitted since it is classical. Detailed arguments are referred to \citet*{Jiang2006ScienceInChina}. 

\begin{thm}[Converse comparison  theorem]\label{thm:SolutionConverseComparisonTheorem}
	Let the generators $g_i$ $(i=1,2)$ satisfy \ref{H:Integrable} -- \ref{H:gLipschitzInZ}. If for each $t\in\tT$ and $\xi\in L^2(\calF_t;\rtn)$, the solutions $\big(Y_s(g_i,t,\xi),Z_s(g_i,t,\xi)\big)_{s\in[0,t]}$ of BSDE $(g_i,t,\xi)$ satisfy that $\prs$, $Y_s(g_1,t,\xi)\geq Y_s(g_2,t,\xi)$ for each $s\in[0,t]$, then for each $y\in\rtn$ and $z\in\rtn^d$,
	\begin{equation*}
	g_1(t,y,z)\geq g_2(t,y,z),\quad\pts.
	\end{equation*}
\end{thm}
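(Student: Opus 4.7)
The plan is to adapt the classical strategy of \citet*{Jiang2006ScienceInChina}: combine the a.s.\ comparison of BSDE solutions supplied by the hypothesis with the representation formula of \cref{thm:RepresentationTheoremForGenerator} applied to both $g_1$ and $g_2$. Fix $(y,z)\in\rtn\times\rtn^d$ and $1\leq p<2$. To obtain a single expansion point for both generators, I would use the common stopping time
\[
\tau:=\inf\bigl\{s\geq t:\,|B_s-B_t|+\intT[s]{t}\bigl(|g_1(r,0,0)|^2+|g_2(r,0,0)|^2\bigr)\dif r>1\bigr\}\wedge T,
\]
which is pointwise dominated by each of the individual stopping times appearing in \cref{thm:RepresentationTheoremForGenerator}. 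Inspection of that proof shows that the only properties of the stopping time actually invoked are $|B_s-B_t|\leq 1$ and $\intT[s]{t}|g_i(r,0,0)|^2\dif r\leq 1$ on $[t,\tau)$; both hold for this common $\tau$. Hence, writing $\xi_\vp:=y+\langle z,B_\tvptau-B_t\rangle$ (which is bounded, hence in $L^2(\calF_{t+\vp})$), we have for $i=1,2$ and $\dte$ $t\in[0,T)$,
\[
g_i(t,y,z)=L^p\text{--}\lim_{\vp\to0^+}\tfrac{1}{\vp}\bigl[Y_t(g_i,\tvptau,\xi_\vp)-y\bigr].
\]

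Next I would invoke the hypothesis with the \emph{deterministic} terminal time $t+\vp$ and terminal condition $\xi_\vp\in L^2(\calF_{t+\vp})$, producing $Y_t(g_1,t+\vp,\xi_\vp)\geq Y_t(g_2,t+\vp,\xi_\vp)$, $\prs$ To reconcile this inequality with the stopping-time representation, set $A_\vp:=\{\tau\geq t+\vp\}$. On $A_\vp$ the two terminal times agree ($\tvptau=t+\vp$) and $\xi_\vp$ is unchanged, so on this event the BSDEs $(g_i,t+\vp,\xi_\vp)$ and $(g_i,\tvptau,\xi_\vp)$ coincide, yielding $\one{A_\vp}Y_t(g_i,t+\vp,\xi_\vp)=\one{A_\vp}Y_t(g_i,\tvptau,\xi_\vp)$. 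Multiplying the hypothesis inequality by $\one{A_\vp}$, subtracting $\one{A_\vp}y$, and dividing by $\vp$ gives
\[
\one{A_\vp}\tfrac{1}{\vp}\bigl[Y_t(g_1,\tvptau,\xi_\vp)-y\bigr]\geq \one{A_\vp}\tfrac{1}{\vp}\bigl[Y_t(g_2,\tvptau,\xi_\vp)-y\bigr],\quad\prs
\]

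Finally, since the quantity defining $\tau$ vanishes at $s=t$ and is continuous in $s$, $\tau>t$ $\prs$, whence $\one{A_\vp}\to 1$ $\prs$ as $\vp\to 0^+$. Using the triangle inequality
\[
\bigl\|\one{A_\vp}\tfrac{1}{\vp}[Y_t(g_i,\tvptau,\xi_\vp)-y]-g_i(t,y,z)\bigr\|_p \leq \bigl\|\tfrac{1}{\vp}[Y_t(g_i,\tvptau,\xi_\vp)-y]-g_i(t,y,z)\bigr\|_p + \bigl\|\one{A_\vp^c}g_i(t,y,z)\bigr\|_p,
\]
the first term tends to $0$ by the representation theorem, while the second tends to $0$ by dominated convergence (integrability of $g_i(t,y,z)$ at fixed $(y,z)$ follows from \ref{H:Integrable}, \ref{H:gGeneralizedGeneralGrowthInY}, \ref{H:gLipschitzInZ}). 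Passing to the $L^p$ limit thus preserves the inequality, giving $g_1(t,y,z)\geq g_2(t,y,z)$, $\pts$

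The main obstacle is the mismatch between the hypothesis, which controls only solutions with \emph{deterministic} terminal times, and \cref{thm:RepresentationTheoremForGenerator}, whose statement involves the random time $\tvptau$. The event $A_\vp$ argument above resolves this, exploiting precisely that $\tau>t$ a.s., so that the truncation introduced by $\tau$ is asymptotically negligible as $\vp\to 0^+$.
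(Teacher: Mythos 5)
The paper offers no proof of this theorem to compare against: it declares the argument classical and refers the reader to Jiang (2006). Judged on its own merits, your strategy is the natural one --- run \cref{thm:RepresentationTheoremForGenerator} for both generators at a single common stopping time (which is legitimate, since the proof of that theorem only uses $|B_s-B_t|\leq 1$ and $\int_t^s|g_i(r,0,0)|^2\dif r\leq 1$ on $[t,\tau)$ together with $\tau>t$ a.s.), feed $\xi_\vp$ into the comparison hypothesis at the deterministic time $t+\vp$, and pass to the $L^p$ limit. You also correctly isolate the genuine obstacle: the hypothesis controls $Y_t(g_i,t+\vp,\xi_\vp)$, a BSDE with generator $g_i$, while the representation theorem speaks about $Y_t(g_i,\tvptau,\xi_\vp)$, a BSDE with the truncated generator $\one{r<\tau}g_i$.

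The step you use to close that gap, however, is not valid. The identity $\one{A_\vp}Y_t(g_i,t+\vp,\xi_\vp)=\one{A_\vp}Y_t(g_i,\tvptau,\xi_\vp)$ asserts a locality property of BSDE solutions with respect to the event $A_\vp=\{\tau\geq t+\vp\}$; but $A_\vp$ is only $\calF_{t+\vp}$-measurable, not $\calF_t$-measurable, and the time-$t$ solution is built from conditional expectations over all of $\Omega$. The two generators differ on $A_\vp^c$, and that discrepancy propagates back to time $t$ everywhere, including on $A_\vp$. A minimal counterexample: with $\xi=0$, $g^1\equiv c$ and $g^2=\one{r<\tau}c$, one has $Y^1_t=c\vp$ while $Y^2_t=c\,\EX\big[(t+\vp)\wedge\tau-t\,\big|\,\calF_t\big]$, and these disagree on $A_\vp$ as soon as $\PR(\tau<t+\vp\,|\,\calF_t)>0$ there. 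What is true --- and what the argument actually needs --- is the quantitative statement
\begin{equation*}
\lim_{\vp\to0^+}\frac{1}{\vp}\,\EXlr{\big|Y_t(g_i,t+\vp,\xi_\vp)-Y_t(g_i,\tvptau,\xi_\vp)\big|^p}=0,
\end{equation*}
i.e.\ that the perturbation $\one{r\geq\tau}g_i(r,Y_r,Z_r)$ of the driver contributes only $o(\vp)$ to the time-$t$ value. Proving this requires an a priori stability estimate for the difference of the two solutions, and under the weak monotonicity condition \ref{H:gWeakMonotonicityInY} this is a Bihari-type argument rather than a one-line observation. Until such an estimate is supplied, the bridge between the hypothesis (deterministic terminal times, generator $g_i$) and the representation formula (truncated generator) remains open, and the proof is incomplete.
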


\section{Applications to viscosity soultions of  semilinear parabolic PDEs}
\label{sec:Applications}
This section will demonstrate a new application of the representation theorem for generators of BSDEs. Indeed, utilizing the representation theorem obtained in \cref{thm:RepresentationTheoremForGenerator}, we  can construct a probabilistic formula for viscosity solutions of a semilinear parabolic PDE of second order. 

Let $b(t,\tilde x):\tT\times\rtn^n\mapsto\rtn^n$, $\sigma(t,\tilde x):\tT\times\rtn^n\mapsto\rtn^{n\times d}$ be two measurable continuous functions which are Lipschitz continuous in $\tilde x$ uniformly with respect to $t$. Then for each given $t\in\tT$ and $x\in\rtn^n$, the following SDE admits a unique solution $(X^{t,x}_s)_{s\in[t,T]}$ in $\calS^2$,
\begin{equation*}
  X^{t,x}_s=x+\intT[s]{t}b(r,X^{t,x}_r)\dif r+\intT[s]{t}\sigma(r,X^{t,x}_r)\dif B_r,\quad s\in[t,T].
\end{equation*}
Now we consider the following BSDE:
\begin{equation*}
  Y^{t,x}_s=\Phi(X^{t,x}_T)+\intT{s}g(r,X^{t,x}_r,Y^{t,x}_r,Z^{t,x}_r)\dif r-\intT{s}\langle Z^{t,x}_r,\dif B_r\rangle,\quad s\in[t,T],
\end{equation*}
where $\Phi:\rtn^n\mapsto\rtn$ and $g:\tT\times\rtn^n\times\rtn\times\rtn^d\mapsto\rtn$ are continuous. Assume that for each $\tilde x\in\rtn^n$, $g(t,\tilde x,y,z)$ satisfies \ref{H:gWeakMonotonicityInY} -- \ref{H:gLipschitzInZ}, and $\Phi(\cdot)$ and $g(t,\cdot,0,0)$ are polynomial growth, i.e., there exist two constants $L>0$ and $p>0$ such that for each $t\in\tT$, $\tilde x\in\rtn^n$, $y\in\rtn$ and $z\in\rtn^d$,
\begin{equation*}
  |\Phi(\tilde x)|+|g(t,\tilde x,0,0)|\leq L(1+|\tilde x|^p).
\end{equation*} 
Note that \ref{H:Integrable} -- \ref{H:gContinuousInY} are fulfilled trivially for $g(\cdot,\widetilde x,\cdot,\cdot)$. Then the previous BSDE admits a unique solution $(Y^{t,x}_s,Z^{t,x}_s)_{s\in[t,T]}\in\calS^2\times\calH^2$. It is obvious that for each $s\in[t,T]$, $Y^{t,x}_s$ is $\calF^t_s=\sigma\{B_r-B_t,t\leq r\leq s\}\vee\mathcal{N}$ measurable, where $\mathcal{N}$ is the class of the $\PR$-null sets of $\calF$. In particular, $Y^{t,x}_t$ is deterministic. From the uniqueness for solutions of BSDEs, we also have that $Y^{t,x}_{t+\vp}=Y^{t+\vp,X^{t,x}_{t+\vp}}_{t+\vp}$ for any $\vp>0$. 

We define the infinitesimal generator of the Markov process $(X^{t,x}_s)_{s\in[t,T]}$ as follows,
\[
  \calL_s:=\frac{1}{2}\sum_{i,j}(\sigma\sigma^*)(s,\tilde x)\frac{\partial^2}{\partial x_i\partial x_j}+\sum_{i}b_i(s,\tilde x)\frac{\partial}{\partial x_i},\quad s\in\tT,\ \tilde x\in\rtn^n,\ 1\leq i,j\leq n,
\]
where and hereafter $\sigma^*$ represents
the transpose of $\sigma$. Next we will prove that the function $u(t,x):=Y^{t,x}_t$, $(t,x)\in\tT \times\rtn^n$, is a viscosity solution of the following system of backward semilinear parabolic PDE of second order:
\begin{equation}\label{eq:SemilinearParabolicPDEs}
  \begin{cases}
    \displaystyle \partial_t u(t,x)+\calL_tu(t,x)+g(t,x,u(t,x),(\sigma^*\nabla u)(t,x))=0,\quad (t,x)\in[0,T) \times\rtn^n;\\
    \displaystyle u(T,x)=\Phi(x),\quad x\in\rtn^n.
  \end{cases}
\end{equation}

Now we adapt the notion of viscosity solution of PDE \eqref{eq:SemilinearParabolicPDEs} from \citet*{CrandallIshiiLions1992BAMS} and \citet*{Pardoux1999NADEC}.
\begin{dfn}
  Let $u\in C(\tT \times\rtn^n;\rtn)$ satisfy $u(T,x)=\Phi(x)$ for $x\in\rtn^n$. $u$ is called a viscosity subsolution (resp. supersolution) of PDE \eqref{eq:SemilinearParabolicPDEs} if, whenever $\varphi\in C^{1,2}_b(\tT \times\rtn^n;\rtn)$, and $(t,x)\in[0,T)\times\rtn^n$ is a global maximum (resp. minimum) point of $u-\varphi$, we have
  \begin{equation*}
    \partial_t \varphi(t,x)+\calL_t\varphi(t,x)+g(t,x,u(t,x),(\sigma^*\nabla \varphi)(t,x))\geq (\text{resp. }\leq)\  0.
  \end{equation*}
  And $u$ is called a viscosity solution of PDE \eqref{eq:SemilinearParabolicPDEs} if it is both a viscosity subsolution and supersolution.
\end{dfn}
 
\begin{thm}\label{thm:ViscositySoutionsExistence}
  Under the above assumptions, $u(t,x):=Y^{t,x}_t$, $(t,x)\in\tT \times\rtn^n$, is a continuous function of $(t,x)$ which grows at most polynomially and it is a viscosity solution of PDE \eqref{eq:SemilinearParabolicPDEs}.
\end{thm}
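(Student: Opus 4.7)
The plan is to split the argument into the analytic properties of $u$ and then the viscosity property, which is the payoff of \cref{thm:RepresentationTheoremForGenerator}. For the growth and continuity of $u$, the effective generator $G(r,\omega,y,z):=g(r,X^{t,x}_r(\omega),y,z)$ satisfies \ref{A:EstimateAssumption} with $\kappa=\rho$, $\mu=\lambda$ and $f_r=|g(r,X^{t,x}_r,0,0)|$, so \cref{lem:EstimateForSolutions} together with the classical polynomial moment bound $\EX[\sup_{s\in[t,T]}|X^{t,x}_s|^{2k}]\leq C_k(1+|x|^{2k})$ and the polynomial growth of $\Phi$ and $g(\cdot,\cdot,0,0)$ yields $|u(t,x)|=|Y^{t,x}_t|\leq L'(1+|x|^{p'})$. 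Joint continuity of $u$ then follows from the standard SDE flow stability $\EX[\sup_s|X^{t,x}_s-X^{t',x'}_s|^2]\to 0$ as $(t',x')\to(t,x)$, plugged into \cref{lem:EstimateForSolutions} applied to the linearised difference BSDE, with the concavity of $\rho$ in \ref{H:gWeakMonotonicityInY} controlling the resulting Bihari--Gronwall step.

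For the subsolution property, fix $\varphi\in C^{1,2}_b(\tT\times\rtn^n;\rtn)$ and let $(t,x)\in[0,T)\times\rtn^n$ be a global maximum of $u-\varphi$; after adding a constant I may assume $u(t,x)=\varphi(t,x)$, so that $u(s,y)\leq\varphi(s,y)$ everywhere. For $\vp\in(0,T-t)$ introduce the stopping time
\begin{equation*}
\tau_\vp:=\inf\Big\{s\geq t:|B_s-B_t|+\intT[s]{t}|g(r,X^{t,x}_r,0,0)|^2\dif r>1\Big\}\wedge T
\end{equation*}
(exactly the \cref{thm:RepresentationTheoremForGenerator} stopping time for $G$) and let $\theta_\vp:=(t+\vp)\wedge\tau_\vp$. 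Itô's formula applied to $\varphi(s,X^{t,x}_s)$ on $[t,\theta_\vp]$, combined with $Y^{t,x}_{\theta_\vp}=u(\theta_\vp,X^{t,x}_{\theta_\vp})\leq\varphi(\theta_\vp,X^{t,x}_{\theta_\vp})$, gives
\begin{equation*}
Y^{t,x}_{\theta_\vp}\leq \xi_\vp:=\varphi(t,x)+\intT[\theta_\vp]{t}[(\partial_r+\calL_r)\varphi](r,X^{t,x}_r)\dif r+\intT[\theta_\vp]{t}\langle(\sigma^*\nabla\varphi)(r,X^{t,x}_r),\dif B_r\rangle.
\end{equation*}
Since the BSDE comparison theorem is available under \ref{H:gWeakMonotonicityInY} and \ref{H:gLipschitzInZ} by standard linearisation, applying it on $[t,\theta_\vp]$ with common generator $G$ yields $u(t,x)=Y^{t,x}_t\leq \bar Y^\vp_t$, where $\bar Y^\vp:=Y(G,\theta_\vp,\xi_\vp)$.

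Now set $z:=(\sigma^*\nabla\varphi)(t,x)$ and $\hat Y^\vp_t:=Y_t(G,\theta_\vp,\varphi(t,x)+\langle z,B_{\theta_\vp}-B_t\rangle)$. Because $G$ inherits \ref{H:Integrable}--\ref{H:gLipschitzInZ} and $r\mapsto G(r,\cdot,y,z)$ is almost surely continuous (by continuity of $g$ and of $r\mapsto X^{t,x}_r$), \cref{thm:RepresentationTheoremForGenerator} combined with \cref{rmk:GContinuousInT} yields
\begin{equation*}
\lim_{\vp\to 0^+}\frac{\hat Y^\vp_t-\varphi(t,x)}{\vp}=g(t,x,\varphi(t,x),z)\quad\text{in }L^p,\ 1\leq p<2,
\end{equation*}
for every $t\in[0,T)$. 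The remaining step is to prove $\vp^{-1}(\bar Y^\vp_t-\hat Y^\vp_t)\to [(\partial_t+\calL_t)\varphi](t,x)$; writing the BSDE for $\bar Y^\vp-\hat Y^\vp$ and invoking \cref{lem:EstimateForSolutions} reduces this to the two convergences $\vp^{-1}\EX\big[\intT[\theta_\vp]{t}[(\partial_r+\calL_r)\varphi](r,X^{t,x}_r)\dif r\,\big|\,\calF_t\big]\to [(\partial_t+\calL_t)\varphi](t,x)$ and the $L^2$-error from the Itô correction $(\sigma^*\nabla\varphi)(r,X^{t,x}_r)-z$ being of order $o(\sqrt\vp)$; both follow from the continuity of the integrands at $r=t$. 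Combining the two limits and passing to the limit in $u(t,x)\leq\bar Y^\vp_t$ after subtracting $\varphi(t,x)$ delivers the subsolution inequality $[(\partial_t+\calL_t)\varphi](t,x)+g(t,x,u(t,x),z)\geq 0$, and the supersolution property follows symmetrically at a global minimum with the reverse comparison. I expect the main obstacle to be this stability estimate for $\bar Y^\vp-\hat Y^\vp$: because $g$ is only weakly monotone in $y$, no pointwise Lipschitz-in-$y$ bound is available and one must rely on the Bihari-type estimate encoded in \cref{lem:EstimateForSolutions}, which is precisely why the truncation built into $\tau_\vp$—and the resulting essential boundedness of the auxiliary processes, in the same spirit as in the proof of \cref{thm:RepresentationTheoremForGenerator}—is indispensable.
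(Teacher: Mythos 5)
Your overall architecture (compare $Y^{t,x}$ with a BSDE whose terminal datum is $\varphi$, apply It\^o to $\varphi(r,X^{t,x}_r)$, and invoke \cref{thm:RepresentationTheoremForGenerator} to identify the limit) is the right one, and your treatment of continuity and polynomial growth is consistent with the paper. But the step you yourself flag as the main obstacle is a genuine gap, not a technicality. You split $\bar Y^\vp_t-\varphi(t,x)$ into $(\bar Y^\vp_t-\hat Y^\vp_t)+(\hat Y^\vp_t-\varphi(t,x))$, where $\bar Y^\vp$ and $\hat Y^\vp$ are \emph{two different} BSDEs driven by the same generator $G(r,y,z)=g(r,X^{t,x}_r,y,z)$, and you need $\vp^{-1}(\bar Y^\vp_t-\hat Y^\vp_t)\to[(\partial_t+\calL_t)\varphi](t,x)$. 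Under \ref{H:gWeakMonotonicityInY} alone this does not follow from \cref{lem:EstimateForSolutions}: (i) that lemma is an a priori estimate for a single BSDE, and when adapted to the difference equation it yields only a Bihari-type inequality $|\delta Y_t|^2\leq C\EX[|\delta\xi|^2|\calF_t]+C\int\rho(\cdot)$, which gives convergence to zero but no linear rate in the terminal discrepancy — and a rate is exactly what dividing by $\vp$ demands; (ii) even granting an $L^2$-Lipschitz stability bound, $\|\delta\xi\|_2$ is only $o(\sqrt\vp)$ because of the It\^o correction, so one must further argue that the martingale part of $\delta\xi$ enters $\delta Y_t$ only through the $z$-Lipschitz channel; and (iii) the generator increment in $y$, $G(r,\bar Y^\vp_r,\cdot)-G(r,\hat Y^\vp_r,\cdot)$, admits no pointwise bound better than $2\widetilde\psi_K(r)+\dots$, whose time-average over $[t,t+\vp]$ does not vanish, so controlling it forces you to rerun the whole approximation machinery of \cref{pro:SequenceOfGenerator} for the pair of solutions.

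The paper avoids this entirely by never comparing two BSDEs with different terminal data: it subtracts $\varphi(\cdot,X^{t,x}_\cdot)$ from the unknown itself, so that $\hat Y^\vp_\cdot:=\overline Y^\vp_\cdot-\varphi(\cdot,X^{t,x}_\cdot)$ solves a \emph{single} BSDE with terminal condition $0$ and shifted generator $G(r,X^{t,x}_r,y,z)=(\partial_r\varphi+\calL_r\varphi)(r,X^{t,x}_r)+g(r,X^{t,x}_r,y+\varphi(r,X^{t,x}_r),z+(\sigma^*\nabla\varphi)(r,X^{t,x}_r))$. One application of \cref{thm:RepresentationTheoremForGenerator} (with \cref{cor:RepresentationTheoremForGenerator} and \cref{rmk:GContinuousInT}) at the point $(y,z)=(0,0)$ then gives $\lim_{\vp\to0^+}\vp^{-1}\hat Y^\vp_t=G(t,x,0,0)$, which is already the full expression $(\partial_t\varphi+\calL_t\varphi)(t,x)+g(t,x,u(t,x),(\sigma^*\nabla\varphi)(t,x))$; combined with $\hat Y^\vp_t=\overline Y^\vp_t-\varphi(t,x)\geq 0$ from the comparison theorem, the subsolution inequality follows with no stability estimate at all. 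A secondary point: your claim that the comparison theorem follows ``by standard linearisation'' is not available here either, since linearisation in $y$ requires a Lipschitz bound; the paper instead cites the comparison theorem of \citet*{Fan2015JMAA}, which is proved under \ref{H:gWeakMonotonicityInY}.
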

\begin{proof}
  The continuity is a consequence of the continuity of $(X^{t,x}_s)_{s\in[t,T]}$ with respect to $(t, x)$ and \cref{lem:EstimateForSolutions}. The polynomial growth follows from classical moment estimates for $(X^{t,x}_s)_{s\in[t,T]}$ and the assumptions on the growth of $\Phi$ and $g$. Note that $u(T,x)=\Phi(x)$ is trivially satisfied. We only prove that $u$ is a viscosity subsolution since the proof of the other assertion is symmetric. 
  
  Take any $\varphi\in C^{1,2}_b(\tT \times\rtn^n;\rtn)$ and $(t,x)\in[0,T)\times\rtn^n$ such that $u-\varphi$ achieves a global maximum at $(t,x)$. Without loss of generality, we set $u(t,x)=\varphi(t,x)$. For $0\leq t\leq t+\vp\leq T$ with $\vp$ small enough and $x\in\rtn^n$, \cref{pro:ExistenceUniquenessOfSolutions} shows that 
  \begin{equation*}
    u(t,x)=u(t+\vp,X^{t,x}_{t+\vp})+\intTe g(r,X^{t,x}_r,Y^{t,x}_r,Z^{t,x}_r)\dif r-\intTe \langle Z^{t,x}_r,\dif B_r\rangle.
  \end{equation*}  
  Let $(\overline Y^\vp_s,\overline Z^\vp_s)_{s\in[t,t+\vp]}\in\calS^2\times\calH^2$ denote the unique solution of the following BSDE:
  \begin{equation*}
    \overline Y^\vp_s=\varphi(t+\vp,X^{t,x}_{t+\vp})+\intTe[s]g(r,X^{t,x}_r,\overline Y^\vp_r,\overline Z^\vp_r)\dif r-\intTe[s]\langle\overline Z^\vp_r,\dif B_r\rangle, \quad s\in[t,t+\vp].
  \end{equation*}
  Then comparison theorem (Theorem 3 in \citet*{Fan2015JMAA}) provides that $u(t,x)=\varphi(t,x)\leq\overline Y^\vp_t$. Moreover, It\^o's formula to $\varphi(r,X^{t,x}_r)$ arrives at
  \begin{equation*}
    \varphi(s,X^{t,x}_s)=\varphi(t+\vp,X^{t,x}_{t+\vp}) -\!\intTe[s]\!(\partial_r\varphi+\calL_r\varphi)(r,X^{t,x}_r)\dif r -\!\intTe[s]\!\langle(\sigma^*\nabla\varphi)(r,X^{t,x}_r),\dif B_r\rangle, \quad\! s\in[t,t+\vp].
  \end{equation*}
  Thus, we obtain that, setting $\hat Y^\vp_\cdot:=\overline Y^\vp_\cdot-\varphi(\;\cdot\;,X^{t,x}_\cdot)$ and $\hat Z^\vp_\cdot:=\overline Z^\vp_\cdot-(\sigma^*\nabla\varphi)(\;\cdot\;,X^{t,x}_\cdot)$, 
  \begin{equation*}
    \hat Y^\vp_s=\intTe[s]G(r,X^{t,x}_r,\hat Y^\vp_r,\hat Z^\vp_r)\dif r-\intTe[s]\langle\hat Z^\vp_r,\dif B_r\rangle,\quad s\in[t,t+\vp],
  \end{equation*}
  where the generator is defined as follows, for each $y\in\rtn$ and $z\in\rtn^d$,
  \begin{equation*}
    G(r,X^{t,x}_r,y,z):=\left(\partial_r\varphi+\calL_r\varphi\right)(r,X^{t,x}_r)+g\big(r,X^{t,x}_r,y+\varphi(r,X^{t,x}_r),z+(\sigma^*\nabla\varphi)(r,X^{t,x}_r)\big).
  \end{equation*}
  Note that $G(t,X^{t,x}_t,y,z)=G(t,x,y,z)$ is deterministic and continuous in $t$.  \cref{thm:RepresentationTheoremForGenerator}, \cref{cor:RepresentationTheoremForGenerator} and \cref{rmk:GContinuousInT} yield that 
  \begin{equation*}
    \left(\partial_t\varphi+\calL_t\varphi\right)(t,x)+g(t,x,u(t,x),(\sigma^*\nabla\varphi)(t,x))=G(t,x,0,0)=\lim_{\vp\to0^+}\frac{1}{\vp}\hat Y^\vp_t,
  \end{equation*}
  which together with the fact $\hat Y^\vp_t\geq 0$ implies the desired result.
\end{proof}

\begin{rmk}
	(i) The required conditions of $g$ in \cref{thm:ViscositySoutionsExistence} are weaker than some existing results including \citet*{Peng1991SSR}, \citet*{PardouxPeng1992SPDETA} and \citet*{Pardoux1999NADEC}. And, the representation theorem approach used in \cref{thm:ViscositySoutionsExistence} overcomes the difficulty that the strict comparison theorem for solution of BSDEs employed in Theorem 3.2 of  \citet*{Pardoux1999NADEC} does not hold in our framework.
	
	(ii) From the proof procedure of \cref{thm:ViscositySoutionsExistence} we can observe that the probabilistic formula for semilinear PDEs holds as soon as the representation theorem for generators holds, where the latter one is determined by the conditions of the original generator $g$. From this point of view, the problem of a probabilistic formula for viscosity solutions of semilinear PDEs can be transformed to a representation problem for the generator of a BSDE.
	
  (iii) Analogous arguments to that in  \cref{thm:ViscositySoutionsExistence} can be extended easily to systems of multidimensional semilinear second order PDEs of both parabolic and elliptic types, more detailed proof procedures are referred to \citet*{Pardoux1999NADEC}. However, for the notion of viscosity solutions to make sense, an extra assumption on $g$ is needed, i.e., the $i$th coordinate of $g$ depends only on the $i$th row of $z$.
\end{rmk}

\end{document}